\numberwithin{equation}{section}
\newtheorem{thm}[equation]{Theorem}
\newtheorem{thmalpha}{Theorem}
\newtheorem{lem}[equation]{Lemma}
\newtheorem{prop}[equation]{Proposition}
\newtheorem{cor}[equation]{Corollary}
\newtheorem{definition}[equation]{Definition}
\newtheorem*{definition*}{Definition}
\newtheorem*{prob*}{Problem}
\theoremstyle{definition}
\newtheorem{ex}[equation]{Example}
\newtheorem{remark}[equation]{Remark}
\DeclareMathOperator{\Aut}{Aut}
\DeclareMathOperator{\im}{{im}}
\DeclareMathOperator{\Lat}{{Lat}}
\DeclareMathOperator{\Nor}{Nor}
\DeclareMathOperator{\support}{supp}
\DeclareMathOperator{\Zmon}{\sf Z}
\DeclareMathOperator{\Nmon}{\sf N}
\newcommand{\N}{\mathbb{N}}
\newcommand{\respart}{R_E}
\newcommand{\supp}[2]{\support_{#1}(#2)}
\newcommand{\psupp}[2]{\support_{#1}^+(#2)}
\author{Joshua Maglione}\address{
	Fakult\"at f\"ur Mathematik\\
   Universit\"at Bielefeld\\
   Postfach 100131\\
	D-33501 Bielefeld, Germany
}
\email{jmaglione@math.uni-bielefeld.de}
\title{Filters compatible with isomorphism testing}
\date{}
\thanks{This research was supported in part by NSF grant DMS 1620454.}
\begin{document}

\begin{abstract}
   Like the lower central series of a nilpotent group, filters generalize the
   connection between nilpotent groups and graded Lie rings. However, unlike the
   case with the lower central series, the associated graded Lie ring may share
   few features with the original group: e.g.~the associated Lie ring can be
   trivial or arbitrarily large. We determine properties of filters such that
   every isomorphism between groups is induced by an isomorphism between graded
   Lie rings. 
\end{abstract}

\maketitle

\section{Introduction}

There are several progressively more general methods to associate a Lie ring to
a nilpotent group, see \cites{Higman:Lie-techniques, Khukhro:p-Auts}. While the
applications vary, a common theme is to make group-theoretic problems easier by
employing linear algebra in the context of the Lie ring. In particular, this
helps in the study of isomorphism and automorphism problems for groups
\cites{BOW:graded-algebras, ELGO:auts, Higman:auto-fixed, M:efficient-filters,
M:small-auts}. A recent development, described in \cite{W:char}, generalizes
approaches from Magnus~\cites{Magnus:BCH-formula, Magnus:Lie-ring} and
Lazard~\cite{Lazard:N-Series}. J.B.\ Wilson defined filters as a means to allow
refinements of the well-studied upper and lower central series associated to
nilpotent groups, while still connected to a graded Lie ring. We describe a
process of lifting isomorphisms between the associated Lie rings to (potential)
isomorphisms between the groups, which has applications to isomorphism testing
of finite $p$-groups.

\emph{Throughout, we assume that $G$ is a finitely generated nilpotent group and
$M$ is a finitely generated commutative monoid}. An indexed family $\{\phi_s\leq
G\mid s\in M\}$ is a \emph{filter} if for all $s,t\in M$, $[\phi_s, \phi_t] \leq
\phi_{s+t} \leq \phi_s\cap\phi_t$, where $[,]$ is the commutator in $G$. The
filter is \emph{finite} if the set of subgroups is finite. The use of more
general monoids rather than just $\N$ allows one to index more of the subgroup
lattice while maintaining an associated $M$-graded Lie ring
\begin{align}\label{eqn:Lie-ring}
   L(\phi) = \bigoplus_{s\ne 0} \phi_s/ \langle \phi_{s+t}\mid t\ne 0\rangle. 
\end{align}

Form a  category of $M$-filtered groups, $M$-\textsf{Group}, together with all
group homomorphisms $\alpha:G\rightarrow H$ such that for all $s\in M$,
$(\phi_s(G))^\alpha \leq \phi_s(H^\alpha)$. A functor $\phi: \textsf{Group}
\rightarrow M\text{-}\textsf{Group}$ is an \emph{$M$-filter functor}. For
example, the lower central series is an $\N$-filter functor as every subgroup in
every $\N$-filter of a group $G$ is a verbal subgroup. An important class of
filter functors are those where isomorphisms $\alpha: G\rightarrow H$ are lifted
from isomorphisms $\hat{\alpha} : L(\phi(G))\rightarrow L(\phi(H))$. This is
especially desireable in the context of computation and, specifically, the Group
Isomorphism Problem~\cites{BGLQW:WL-iso, BOW:graded-algebras,
M:efficient-filters}. In the next subsection, we define a large class of filters
with this property, but we first state the main result.

An \emph{inertia-free, faithful filter} $\phi$ of a group $G$ is one that
satisfies some non-degeneracy conditions and gives rise to a generating set
$\mathcal{X}\subseteq G$ (detailed Definitions~\ref{def:inert}
and~\ref{def:faithful}), which is analogous to the connection between polycyclic
series and polycyclic generating set; see~\cite{Sims:book}*{Chapter~9}. 

\begin{thmalpha}\label{thm:Main0}
   If $\phi$ is a finite, inertia-free, faithful-filter functor, then all
   isomorphisms between groups $G$ and $H$ are lifts of $M$-graded isomorphisms
   between $L(\phi(G))$ and $L(\phi(H))$. 
\end{thmalpha}

\subsection{Applications to group isomorphism}

One of the driving motivations for Theorem~\ref{thm:Main0} comes from the
isomorphism problem of groups, see for example \cites{BCQ:Poly, GQ:group-iso}.
General-purpose algorithms to decide isomorphism or compute automorphism groups
of finite nilpotent groups rely on induction and the ability to construct
characteristic subgroups \cites{BGLQW:WL-iso, CH:iso, ELGO:auts,
O'Brien:isomorphism}. The algorithms for finite nilpotent groups reduce to
finite $p$-groups. The difficult case is when $G$ has precisely three known
characteristic subgroups, namely $G$, $G'$, and $\langle 1\rangle$.

Because of Theorem~\ref{thm:Main0}, we can leverage the structure of the Lie
algebra $L(\phi)$ to constrain the possible automorphisms of $G$. For example, a
characteristic subalgebra of $L(\phi)$ induces a characteristic subgroup of $G$,
which can be used to refine $\phi$ into a new filter capturing richer structure.
Additionally, if it is more efficient to calculate graded Lie automorphisms of
$L(\phi)$ than to construct $\Aut(G)$ recursively by going down the lower
$p$-central series, this may be a starting point to construct automorphisms of
$G$. For example, there is a large family of graded algebras $L$ such that the
graded Lie automorphisms can be constructed in time polynomial in $|L|$, see
\cite{BOW:graded-algebras} for details. 

Filters---in particular, their associated $M$-graded Lie rings---are a
significant resource for constructing characteristic subgroups efficiently, see
the examples in Section~\ref{sec:examples-section}
and~\cites{M:efficient-filters, M:classical, W:char}. The main benefit is that
the inclusion of one new subgroup in a filter can drastically change the filter
and the associated graded Lie algebra. More work is needed to understand how to
best refine filters, but this opens the door to efficient recursive methods,
uncovering characteristic subgroups. The characteristic features of the
associated $M$-graded Lie rings offer the possibility of exponential speedups in
the realm of isomorphism testing for finite $p$-groups. 

We also view Theorem~\ref{thm:Main0} as a necessary first-step to developing a
general, efficient filter refinement algorithm, similar to
how~\cite{M:efficient-filters}*{Theorem~6} was used to construct an efficient
filter refinement algorithm over totally ordered monoids. Such a refinement
algorithm would be applied recursively until we have exhausted all possible
refinements or the automorphism group of the Lie ring is small enough to run
through. And if at each step, the algorithm can guarantee the filter is
inertia-free and faithful, then the final task is to try to lift all Lie
isomorphisms.

\subsection{Detailed description of results}

We now detail the classes of filters in the statement of
Theorem~\ref{thm:Main0}.

\begin{definition}\label{def:filter}
   For a pre-ordered monoid $M=\langle M,\preceq\rangle$, with minimal element
   $0$, and a group $G$, an $(M,G)$-\emph{filter} is a function from $M$ into
   the set of normal subgroups of $G$ such that $\langle\phi_s\mid s\ne 0\rangle
   = G$, $\bigcap_{s\in M}\phi_s = 1$, and for all $s,t\in M$,
   \begin{align*}
      [\phi_s,\phi_t]&\leq \phi_{s+t} & s\preceq t &\implies \phi_s\geq\phi_t.
   \end{align*}
\end{definition}

We call a pre-ordered (commutative) monoid, with minimal element $0$,
\emph{conically pre-ordered}. An $(M, G)$-filter $\phi$ is \emph{finite} if
$\im(\phi)$ is a finite set. We call $\phi$ \emph{degenerate} if either $\langle
\phi_s\mid s\ne 0\rangle \ne G$ or $\bigcap_{s\in M}\phi_s \ne 1$. The
\emph{boundary filter} of an $(M,G)$-filter $\phi$, denoted $\partial\phi$, is
the $(M,G)$-filter such that for all $s\in M$, $\partial\phi_s = \langle
\phi_{s+t} \mid t\ne 0\rangle$. Then the $M$-graded Lie ring
from~\eqref{eqn:Lie-ring} is $L(\phi) = \bigoplus_{s\ne 0}
\phi_s/\partial\phi_s$. We call $\mathcal{Y}\subseteq L(\phi)$ a \emph{graded
generating set} if $\mathcal{Y}$ generates the abelian group $L(\phi)$ and for
each $y\in \mathcal{Y}$ there exists $s\in M$ such that $y\in L_s(\phi):=
\phi_s/\partial\phi_s$. 

\begin{thmalpha}\label{thm:Main}
   If $\phi$ is a finite $(M,G)$-filter, with partial order
   $\preceq$, then there exists a conically partially-ordered monoid $M'$, an
   $(M',G)$-filter $\theta$, a graded generating set $\mathcal{Y}$ for the
   abelian group $L(\theta)$, and a surjection of sets $\pi_{\mathcal{Y}} :
   L(\theta) \rightarrow G$ such that $\im(\phi)\subseteq\im(\theta)\subset 2^G$
   and $\pi_{\mathcal{Y}}(\mathcal{Y})$ generates $G$. 
\end{thmalpha}

The requirement that $\preceq$ is a partial order can be replaced with
additional assumptions on $\phi$ (Theorems~\ref{thm:general-progressive}
\&~\ref{thm:surjection}). The filter $\theta$ constructed in
Theorem~\ref{thm:Main} is \emph{inertia-free}, defined below, and this property
guarantees $\pi_{\mathcal{Y}}$ is a surjection. For an $(M,G)$-filter $\phi$,
define an ascending chain of subsets $\mathfrak{B}$ in $\im(\phi)$ such that
$\mathfrak{B}_0=\{\langle 1\rangle\}$ and for $i\geq 0$,
\begin{align}\label{eqn:inertia-set}
   \mathfrak{B}_{i+1} = \{ \phi_s \mid \exists B\subseteq \mathfrak{B}_i,\, \partial\phi_s=\langle B\rangle \}.
\end{align}
\begin{definition}\label{def:inert}
   A filter $\phi$ is \emph{inertia-free} if $\bigcup_{i\geq
   0}\mathfrak{B}_i=:\mathfrak{B}=\im(\phi)$. A subgroup $H\in\im(\phi)$ is an
   \emph{inert} subgroup if $H\in\im(\phi)\setminus\mathfrak{B}$. 
\end{definition}

For example, if $\gamma$ is the $(\N, G)$-filter given by the lower central
series (with $\gamma_0=\gamma_1=G$), then $\partial\gamma_k=\gamma_{k+1}$. Since
$\gamma_k\ne\gamma_{k+1}$ unless either $k=0$ or $\gamma_k=\langle 1\rangle$, it
follows that $\mathfrak{B}_i = \{\gamma_j~|~ j\geq c-i\}$, where $c$ is the
nilpotency class of $G$. The integer $i$ in~\eqref{eqn:inertia-set} measures the
``distance'' between $H\in\mathfrak{B}_i\setminus\mathfrak{B}_{i-1}$ and
$\langle1\rangle$ by taking boundaries. Inert subgroups are those that never
reach $\langle 1\rangle$.

Further properties of an inertia-free filter are required to ensure that
$\pi_{\mathcal{Y}}$ from Theorem~\ref{thm:Main} is injective. The closure of
$\im(\phi)\subset 2^G$ under intersections and products will be denoted by
$\Lat(\phi)$.

\begin{definition}\label{def:faithful}
   An $(M, G)$-filter $\phi$ is \emph{faithful} if there exists
   $\mathcal{X}\subseteq G$ such that 
   \begin{enumerate}
      \item[$(i)$] for all $H\in\Lat(\phi)$, $\langle H\cap \mathcal{X}\rangle =
      H$, 
      \item[$(ii)$] the map $H\mapsto H\cap \mathcal{X}$ is a lattice embedding
      from $\Lat(\phi)$ into the subset lattice of $\mathcal{X}$, and 
      \item[$(iii)$] for each $x\in \mathcal{X}$, there exists a unique $s\in M$
      such that $x\in\phi_s\setminus\partial\phi_s$.
   \end{enumerate}
\end{definition}

As $G$ is a finitely generated nilpotent group, it is a polycyclic group. Every
polycyclic group $G$ has a \emph{polycyclic generating sequence (pcgs)}
$(a_1,\dots, a_n)$ such that for $G_i=\langle a_i,\dots, a_n\rangle$, the
factors $G_i/G_{i+1}$ are cyclic for all $i\in\{1,\dots, n-1\}$, see for example
\cite{Sims:book}*{Chapter~9}. Along with the map given in
Theorem~\ref{thm:Main}, we use properties of pcgs to prove the next theorem.

\begin{thmalpha}\label{thm:Main2}
   If $\phi$ is a finite, faithful, and inertia-free $(M, G)$-filter, then there exists a bijection between $L(\phi)$ and $G$ that maps a pcgs of $L(\phi)$, as an abelian group, to a pcgs of $G$.
\end{thmalpha}

In particular, if the filter $\theta$ from Theorem~\ref{thm:Main} is faithful,
then the map $\pi_{\mathcal{Y}}$ is a bijection. When the monoid $M$ is cyclic,
like in the $(\N, G)$-filter $\gamma$ given by the lower central series with
$\gamma_0 = \gamma_1 = G$, every filter is faithful and inertia-free. In the
special case that the monoid $M$ is totally ordered, one can construct faithful,
inertia-free filters in a simpler way, see~\cite{M:efficient-filters}.
Therefore, we are interested in pre-orders that are not total orders, but we do
not require this.

We are interested in lifting isomorphisms between graded Lie rings to
isomorphisms between groups and Theorem~\ref{thm:Main2} is critical to that
objective. For an $(M, G)$-filter $\phi$, define its \emph{border set} as 
\begin{align}\label{eqn:relevant-set}
   \mathcal{I}_\phi = \left\{ s\in M ~\middle|~ \partial\phi_s \ne \phi_s \text{ or } \phi_s = 1 \right\}.
\end{align}
Note that $L(\phi)\cong\bigoplus_{s\in\mathcal{I}_\phi}\phi_s/\partial\phi_s$ as
abelian groups.

If $G$ and $H$ are groups and $\alpha:G\rightarrow H$ is a homomorphism, then
$\alpha$ induces a homomorphism between $M$-\textsf{Groups}. That is, if $\phi$
is a possibly degenerate $(M, G)$-filter, then $\phi^{\alpha}$ is a possibly
degenerate $(M, H)$-filter, where for all $s\in M$, $(\phi^{\alpha})_s =
(\phi_s)^\alpha$. Furthermore, $\alpha$ induces an $M$-graded Lie homomorphism
$\hat{\alpha}: L(\phi)\rightarrow L(\phi^\alpha)$. If $\alpha$ is an isomorphism
and $\phi$ is inertia-free and faithful, then $\phi^\alpha$ is inertia-free and
faithful and $\hat{\alpha}$ is an isomorphism.

The goal now is to produce group isomorphisms given only $M$-graded Lie
isomorphisms when possible. That is, if $\phi$ and $\theta$ are $(M, G)$- and
$(M, H)$-filters and $\beta: L(\phi)\rightarrow L(\theta)$ is an $M$-graded
isomorphism, determine if there exists an isomorphism $\alpha : G\rightarrow H$
such that $\phi^\alpha = \theta$ and $\hat{\alpha} = \beta$. There may not exist
an $\alpha$ for a given $\beta$ because some commutator relations in $G$ may be
trivial in $L(\phi)$. We choose transversals for each $s\in M$, say $\tau_s :
\phi_s/\partial\phi_s \rightarrow \phi_s$ and $\sigma_s :
\theta_s/\partial\theta_s \rightarrow \theta_s$. For each $s\in M$, let
$\mathcal{X}_s$ be the image of a generating set for $L_s(\phi)$ under $\tau_s$.
Define a \emph{partial lift} of $\beta$ to be the function $\alpha :
\bigsqcup_{s\in \mathcal{I}_{\phi}} \mathcal{X}_s \rightarrow H$ such that if
$x\in \mathcal{X}_s$, then $x\mapsto (\partial\phi_sx)^{\beta\sigma_s}$. If
$\alpha$ induces a group homomorphism, then such a homomorphism is called a
\emph{lift} of $\beta$. 

\begin{thmalpha}\label{thm:Main3}
   Suppose $G$ and $H$ are groups and $\alpha: G\rightarrow H$ is an
   isomorphism. If $\phi$ is a finite, inertia-free, faithful $(M, G)$-filter,
   then $\alpha$ is a lift of $\hat{\alpha}$.
\end{thmalpha}

If, in particular, $\phi$ is also a characteristic $(M, G)$-filter, for example
like in \cites{M:classical, M:efficient-filters, W:char}, then every
automorphism of $G$ is a lift of an $M$-graded Lie automorphism of $L(\phi)$. We
say an $M$-filter functor $\phi$ satisfies a property if for all groups $G$,
$\phi(G)$ satisfies that same property. With this, Theorem~\ref{thm:Main3}
implies Theorem~\ref{thm:Main0}.

\begin{remark}
   In the context of isomorphism testing of finite $p$-groups, one may want
   monoids, or semigroups, that are not as general as we allow here. Indeed,
   focusing on a specific class of monoids may reduce complexity of algorithms.
   For example, semigroups with the relation that, for some integer $n$, all
   sums of at least $n+1$ nontrivial elements are the same. For finite
   $p$-groups $G$, the integer $n$ could be the nilpotency class of $G$ or
   $\log_p|G|$ for example. Filters $\phi$ over these semigroups are
   inertia-free, so by Theorem~\ref{thm:Main}, there exists a surjection from
   $L(\phi)$ to $G$. In order to take advantage of Theorem~\ref{thm:Main3}, care
   is still needed to construct faithful filters. 
\end{remark}

\subsection{Overview}

Section~\ref{sec:background} details preliminary definitions and theorems needed
for the rest of the paper. We include examples of filters referenced in the
introduction. In Section~\ref{sec:monoids} we prove statements about conically
pre-ordered monoids that are used in Section~\ref{sec:inert}, where we provide a
general construction for filters for which there exists a surjection from the
Lie ring to the group---thereby proving Theorem~\ref{thm:Main}. In
Sections~\ref{sec:partially-ordered} and~\ref{sec:faithful-filters} we turn to
desireable combinatorial properties from lattices. We investigate structural
properties of faithful and inertia-free filters, and we prove
Theorems~\ref{thm:Main2} and~\ref{thm:Main3}. We close with some examples in
Section~\ref{sec:examples-section}.

\section{Preliminaries}\label{sec:background}

\subsection{Notation}\label{sec:assumptions}
For a set $\mathcal{X}$, we let $2^{\mathcal{X}}$ denote the power set of
$\mathcal{X}$. For a group $G$, let $\Nor(G)\subset 2^G$ denote the set of
normal subgroups of $G$. We denote the set of nonnegative integers by $\N$.

For $x,y\in G$, set $[x] = x$ and $[x,y] = x^{-1}y^{-1}xy$. For $x_1,\dots,
x_n\in G$, we recursively define $[x_1,\dots, x_n] = [[x_1,\dots, x_{n-1}],
x_n]$. We say the commutator $[x_1,\dots, x_n]$ has \emph{weight} $n$. For
$X,Y\subseteq G$, set $[X,Y] = \langle [x,y] ~|~ x\in X, y\in Y\rangle$; we
apply the same recursive formula for commutators of subsets of weight $n$. Let
$\gamma_1=G$ and for $i\geq 1$, set $\gamma_{i+1} = [\gamma_i, G]$. A nilpotent
group has class $c$ if $\gamma_c > \gamma_{c+1} = 1$.

A commutative monoid $\langle M,+,0\rangle$ is pre-ordered by a pre-order
$\preceq$ if $s\preceq t$ and $s'\preceq t'$ imply that $s+s'\preceq t+t'$.
Throughout, we will use $\preceq$ by default for the pre-order on $M$. For $s,t
\in M$, we let $s\prec t$ denote $s\preceq t$ and $s\ne t$ (in general $\prec$
is not transitive). An element $s\in M$ is a \emph{unit} if there exists $t\in
M$ such that $s+t=0$. For $s,t\in M$, we let $s\parallel t$ denote the case when
$s$ and $t$ are incomparable under $\preceq$, i.e.\ $s\not\preceq t$ and
$t\not\preceq s$. A subset $S\subseteq M$ is a $\preceq$-\emph{chain} if $S$ is
totally ordered with respect to $\preceq$. Similarly, $S\subseteq M$ is an
$\preceq$-\emph{antichain} if every distinct pair of elements of $S$ is
incomparable.

A partially ordered set $L$ is a \emph{lattice} if for all $x, y\in L$ both
$x\cap y\in L$ and $x\cup y\in L$. A partially ordered set $L$ is a
\emph{complete lattice} if for all $X\subseteq L$ both $\bigcap_{x\in X}x\in L$
and $\bigcup_{x\in X}x\in L$. All of our lattices are sublattices of either
$2^{\mathcal{X}}$ or $\Nor(G)$. Therefore, $\cap$ and $\cup$ are understood to
be intersection and either set or subgroup union, respectively.

\subsection{Monoids and pre-orders}

An important pre-order on monoids is the \emph{algebraic pre-order} denoted
$\preceq_+$ where for $s,t\in M$,
\begin{align*} 
   s \preceq_+ t \implies \exists u\in M, s+u=t.
\end{align*}
Every commutative monoid is pre-ordered by $\preceq_+$. Another pre-order that
we will use in examples later is the lexicographical (abbreviated lex)
pre-order. Suppose $(M,\leq)$ and $(N,\preceq)$ are two pre-ordered commutative
monoids. The \emph{lex order} of $M\times N$, denoted $\leq_{\ell}$, is defined
as follows. For all $(m,n),(m',n')\in M\times N$, 
\begin{align*} 
   (m,n) \leq_{\ell} (m',n') \iff (m < m') \text{ or } (m=m' \text{ and } n\preceq n').
\end{align*}

It is possible to describe all cyclic monoids up to isomorphism;
cf.~\cite{Grillet:book}*{Proposition~5.8}. Let $r,s\in\N$ with $s\geq 1$. Define
a congruence $\sim$ on $\N$ where $i,j\in \N$,
\begin{align*} 
   i\sim j \Longleftrightarrow \left\{ \begin{array}{ll} 
      i\equiv j\; (\text{mod }s) & \text{if }i,j\geq r, \\
      i=j & \text{otherwise}. 
   \end{array}\right.
\end{align*}
Define $C_{r,s}= \N /\!\!\sim$ with addition induced from $\N$, and note that
$|C_{r,s}|=r+s$. 

\subsection{Examples of filters}\label{sec:examples}

We provide some explicit examples of properties we want to avoid. To emphasize
that these properties are inherently concerned with the monoid, we use a similar
group in all these examples. For a ring $R$, we denote the Heisenberg group over
$R$ by
\[ 
   H(R) = \left\{ \begin{bmatrix} 
      1 & a & c \\ 
      & 1 & b \\ 
      & & 1 
   \end{bmatrix}\;\middle|\; a,b,c\in R\right\}. 
\]
We plot all of the filters in this section in Figure~\ref{fig:ex1}.

\begin{ex}\label{ex:trivial}
   Here we construct a filter such that the associated Lie ring is trivial.
   Let $G=H(\mathbb{Z})$, and let $M=(\N^2,\preceq_\ell)$, 
   ordered by the lex ordering. For $s\in M$,          
   \[ 
      \phi_s = \left\{ \begin{array}{ll} 
         G & s\prec_\ell (2,0), \\ 
         G' & (2,0)\preceq_\ell s \prec_\ell (3,0), \\ 
         1 & (3,0) \preceq_\ell s. 
      \end{array}\right. 
   \]
   For all $s\in M$, $\phi_{s} = \phi_{s+(0,1)}$. Therefore, for all $s\in M$,
   $\partial\phi_s=\phi_s$, so $L(\phi)=0$. In fact, $\phi$ is the resulting
   filter when applied to the generation formula in~\cite{W:char}*{Theorem~3.3},
   with $X=\{s\in M\mid s\prec_\ell (2,0)\}$ and $\pi:X\rightarrow \{G\}$ the
   constant function.
\end{ex}

\begin{ex}\label{ex:infinite}
   We construct a family of filters for a fixed finite group $G$ whose associated Lie algebras are of arbitrarily large dimension.
   Let $K$ be a finite field, $G=H(K)$ and $M=(\N^2, \preceq_+)$. Fix $n\geq 2$. For $s=(i,j)\in M$ define
   \begin{align*}
      \phi_s &= \left\{ \begin{array}{ll} 
         G & i+j \leq 1, \\
         G' & 2 \leq i+j \leq n,\\
         1 & \text{otherwise}.
      \end{array}\right.
   \end{align*}
   The boundary filter is given by 
   \begin{align*}
      \partial \phi_s &= \left\{ \begin{array}{ll} 
         G & i = j = 0, \\
         G' & 1 \leq i+j \leq n-1,\\
         1 & \text{otherwise},
      \end{array}\right.
   \end{align*}
   where $s=(i,j)\in M$. It follows that $L(\phi)$ is a $K$-algebra, and the
   dimension of $L(\phi)$ over $K$ is $n+5$. 
\end{ex}

\begin{ex}\label{ex:just-bijection}
   We combine aspects of the Examples~\ref{ex:trivial} and~\ref{ex:infinite} and
   construct a filter where the Lie ring and group are in bijection as sets.
   However, no pre-image of a pcgs for the Lie ring induces a pcgs for the
   group. 

   Let $G=H(K)$ for a finite field $K$ and $M=(\N^2,\preceq_+)$. For $s=(i,j)\in\N^2$, define 
   \begin{align*}
      \phi_s &= \left\{ \begin{array}{ll}
         G & i = 0, \\
         G' & i = 1\text{ or } i+j\leq 4, \\
         1 & \text{otherwise}.
      \end{array}\right.
   \end{align*}
   The border set is $\mathcal{I}_\phi = \{ (4,0), (3, 1), (2, 2) \}$, so as $K$-vector spaces $L(\phi) \cong K^3$. Therefore, $G$ and $L(\phi)$ are in bijection, but $L(\phi) = (G')^3$ is an abelian Lie algebra. In particular, every pre-image of a pcgs for $L(\phi)$ will be contained in the center of $G$. 
\end{ex}

\begin{ex}\label{ex:not-progressive}
   We construct an example that conspicuously hides subgroups.
   Let $M = C_{3,1}\times C_{1, 1}$ be the monoid with pre-order given by the algebraic order $\preceq_+$. 
   Let $G=H(\mathbb{Z})$, and suppose $K\triangleleft G'$ has index $2$ in $G'$. Define an $(M, G)$-filter $\phi$ such that, for $s=(i,j)\in M$,
   \begin{align*}
      \phi_s &=\left\{ \begin{array}{ll}
         G & i = j = 0,\\
         \gamma_i(G) & i > j = 0,\\
         K & i\leq 2, j=1, \\
         1 & i=3, j=1.
      \end{array}\right.
   \end{align*}
   The boundary filter is then
   \begin{align*}
      \partial \phi_s &=\left\{ \begin{array}{ll}
         G & i = j = 0,\\
         G' & (i, j) = (1,0),\\
         1 & i=3, \\
         K & \text{otherwise}.
      \end{array}\right.
   \end{align*}
   Therefore, as abelian groups $L(\phi) \cong \mathbb{Z}^2 \oplus \mathbb{Z}/2\mathbb{Z}$. This filter also serves as an example of a filter for which our construction in Section~\ref{sec:inert} cannot be applied. Since the order on $M$ is a partial order, Theorem~\ref{thm:Main} can still be applied to this filter, but we need to change the monoid.
\end{ex}

\begin{figure}[ht]
   \begin{subfigure}[b]{0.48\textwidth}
      \centering
      \begin{tikzpicture}
         \pgfmathsetmacro{\myscale}{0.6}
         % axes
         \node (ptx) at (-0.328,-0.2) {};
         \node (pty) at (-0.2,-0.328) {};
         \node (x) at (0.4 + 3*\myscale,-0.2) {};
         \node (y) at (-0.2, 0.45 + 2*\myscale) {};
         \draw[->] (ptx) -- (x);
         \draw[->] (pty) -- (y);
      
         % labels
         \node (x0) at (0,-0.5) {0};
         \node (x1) at (\myscale,-0.5) {1};
         \node (x2) at (2*\myscale,-0.5) {2};
         \node (x3) at (3*\myscale,-0.5) {3};
         \node (y0) at (-0.5,0) {0};
         \node (y1) at (-0.5,\myscale) {1};
         \node (y2) at (-0.5,2*\myscale) {2};
         
         % subgroups
         % y=0:
         \node (G) at (0,0) {$G$};
         \node (H) at (\myscale,0) {$G$};
         \node (G3) at (2*\myscale,0) {$G'$};
         \node (G4) at (3*\myscale,0) {$1$};
         % y=1:
         \node (K) at (0,\myscale) {$G$};
         \node (G2) at (\myscale,\myscale) {$G$};
         \node (G32) at (2*\myscale,\myscale) {$G'$};
         \node (G42) at (3*\myscale,\myscale) {$1$};
         % y=2:
         \node (L) at (0,2*\myscale) {$G$};
         \node (G33) at (\myscale,2*\myscale) {$G$};
         \node (G43) at (2*\myscale,2*\myscale) {$G'$};
         \node (G44) at (3*\myscale,2*\myscale) {$1$};
      \end{tikzpicture}
      \caption{The filter from Example~\ref{ex:trivial}.}
      \label{fig:ex-trivial}
   \end{subfigure}\hfill%
   \begin{subfigure}[b]{0.48\textwidth}
      \centering
         \begin{tikzpicture}
            \pgfmathsetmacro{\myscale}{0.6}
            % axes
            \node (ptx) at (-0.328,-0.2) {};
            \node (pty) at (-0.2,-0.328) {};
            \node (x) at (0.4 + 4*\myscale,-0.2) {};
            \node (y) at (-0.2, 0.45 + 3*\myscale) {};
            \draw[->] (ptx) -- (x);
            \draw[->] (pty) -- (y);
         
            % labels
            \node (x0) at (0,-0.5) {0};
            \node (x1) at (\myscale,-0.5) {1};
            \node (x2) at (2*\myscale,-0.5) {2};
            \node (x3) at (3*\myscale,-0.5) {3};
            \node (x4) at (4*\myscale,-0.5) {4};
            \node (y0) at (-0.5,0) {0};
            \node (y1) at (-0.5,\myscale) {1};
            \node (y2) at (-0.5,2*\myscale) {2};
            \node (y3) at (-0.5,3*\myscale) {3};
            
            % subgroups
            % y=0:
            \node (G) at (0,0) {$G$};
            \node (H) at (\myscale,0) {$G$};
            \node (G3) at (2*\myscale,0) {$G'$};
            \node (G4) at (3*\myscale,0) {$G'$};
            \node (G40) at (4*\myscale,0) {$1$};
            % y=1:
            \node (K) at (0,\myscale) {$G$};
            \node (G2) at (\myscale,\myscale) {$G'$};
            \node (G32) at (2*\myscale,\myscale) {$G'$};
            \node (G42) at (3*\myscale,\myscale) {$1$};
            \node (G41) at (4*\myscale,\myscale) {$1$};
            % y=2:
            \node (L) at (0,2*\myscale) {$G'$};
            \node (G33) at (\myscale,2*\myscale) {$G'$};
            \node (G43) at (2*\myscale,2*\myscale) {$1$};
            \node (G44) at (3*\myscale,2*\myscale) {$1$};
            \node (G43) at (4*\myscale,2*\myscale) {$1$};
            % % y=3:
            \node (14) at (0,3*\myscale) {$G'$};
            \node (15) at (\myscale,3*\myscale) {$1$};
            \node (16) at (2*\myscale,3*\myscale) {$1$};
            \node (17) at (3*\myscale,3*\myscale) {$1$};
            \node (18) at (4*\myscale,3*\myscale) {$1$};
         \end{tikzpicture}
         \caption{The filter from Example~\ref{ex:infinite}, for $n=3$.}
         \label{fig:ex-infinite}
   \end{subfigure}
   \begin{subfigure}[b]{0.48\textwidth}
      \centering
         \begin{tikzpicture}
            \pgfmathsetmacro{\myscale}{0.5}
            % axes
            \node (ptx) at (-0.328,-0.2) {};
            \node (pty) at (-0.2,-0.328) {};
            \node (x) at (0.4 + 5*\myscale,-0.2) {};
            \node (y) at (-0.2, 0.45 + 4*\myscale) {};
            \draw[->] (ptx) -- (x);
            \draw[->] (pty) -- (y);
         
            % labels
            \node (x0) at (0,-0.5) {0};
            \node (x1) at (\myscale,-0.5) {1};
            \node (x2) at (2*\myscale,-0.5) {2};
            \node (x3) at (3*\myscale,-0.5) {3};
            \node (x4) at (4*\myscale,-0.5) {4};
            \node (x5) at (5*\myscale,-0.5) {5};
            \node (y0) at (-0.5,0) {0};
            \node (y1) at (-0.5,\myscale) {1};
            \node (y2) at (-0.5,2*\myscale) {2};
            \node (y3) at (-0.5,3*\myscale) {3};
            \node (y4) at (-0.5,4*\myscale) {4};
            
            % subgroups
            % y=0:
            \node (G) at (0,0) {$G$};
            \node (H) at (\myscale,0) {$G'$};
            \node (G3) at (2*\myscale,0) {$G'$};
            \node (G4) at (3*\myscale,0) {$G'$};
            \node (G40) at (4*\myscale,0) {$G'$};
            \node (G40) at (5*\myscale,0) {$1$};
            % y=1:
            \node (K) at (0,\myscale) {$G$};
            \node (G2) at (\myscale,\myscale) {$G'$};
            \node (G32) at (2*\myscale,\myscale) {$G'$};
            \node (G42) at (3*\myscale,\myscale) {$G'$};
            \node (G41) at (4*\myscale,\myscale) {$1$};
            \node (G41) at (5*\myscale,\myscale) {$1$};
            % y=2:
            \node (L) at (0,2*\myscale) {$G$};
            \node (G33) at (\myscale,2*\myscale) {$G'$};
            \node (G43) at (2*\myscale,2*\myscale) {$G'$};
            \node (G44) at (3*\myscale,2*\myscale) {$1$};
            \node (G43) at (4*\myscale,2*\myscale) {$1$};
            \node (G43) at (5*\myscale,2*\myscale) {$1$};
            % % y=3:
            \node (14) at (0,3*\myscale) {$G$};
            \node (15) at (\myscale,3*\myscale) {$G'$};
            \node (16) at (2*\myscale,3*\myscale) {$1$};
            \node (17) at (3*\myscale,3*\myscale) {$1$};
            \node (18) at (4*\myscale,3*\myscale) {$1$};
            \node (18) at (5*\myscale,3*\myscale) {$1$};
            % % y=4:
            \node (14) at (0,4*\myscale) {$G$};
            \node (15) at (\myscale,4*\myscale) {$G'$};
            \node (16) at (2*\myscale,4*\myscale) {$1$};
            \node (17) at (3*\myscale,4*\myscale) {$1$};
            \node (18) at (4*\myscale,4*\myscale) {$1$};
            \node (18) at (5*\myscale,4*\myscale) {$1$};
         \end{tikzpicture}
         \caption{The filter from Example~\ref{ex:just-bijection}.}
         \label{fig:ex-bijection}
   \end{subfigure}\hfill%
   \begin{subfigure}[b]{0.48\textwidth}
      \centering
      \begin{tikzpicture}
         \pgfmathsetmacro{\myscale}{0.6}
         % axes
         \node (ptx) at (-0.328,-0.2) {};
         \node (pty) at (-0.2,-0.328) {};
         \node (x) at (0.4 + 3*\myscale,-0.2) {};
         \node (y) at (-0.2, 0.45 + 1*\myscale) {};
         \draw[-] (ptx) -- (x);
         \draw[-] (pty) -- (y);
      
         % labels
         \node (x0) at (0,-0.5) {0};
         \node (x1) at (\myscale,-0.5) {1};
         \node (x2) at (2*\myscale,-0.5) {2};
         \node (x3) at (3*\myscale,-0.5) {3};
         \node (y0) at (-0.5,0) {0};
         \node (y1) at (-0.5,\myscale) {1};
         
         % subgroups
         % y=0:
         \node (G) at (0,0) {$G$};
         \node (H) at (\myscale,0) {$G$};
         \node (G3) at (2*\myscale,0) {$G'$};
         \node (G4) at (3*\myscale,0) {$1$};
         % y=1:
         \node (K) at (0,\myscale) {$K$};
         \node (G2) at (\myscale,\myscale) {$K$};
         \node (G32) at (2*\myscale,\myscale) {$K$};
         \node (G42) at (3*\myscale,\myscale) {$1$};
      \end{tikzpicture}
      \caption{The filter from Example~\ref{ex:not-progressive}.}
      \label{fig:ex-progressive}
   \end{subfigure}
   \caption{The plots of every filter in Section~\ref{sec:examples}.}
   \label{fig:ex1}
\end{figure}

\section{Monoids associated to filters}\label{sec:monoids}

The assumption that $0$ is minimal is important for filters which also restricts
the possible monoids we consider. Recall, $s\in M$ is a unit of $M$ if there
exists $t\in M$ such that $s+t=0$. A commutative monoid $M$ is \emph{conical} if
$s+t=0$ implies $s=t=0$, for all $s,t\in M$. 

\begin{lem}\label{lem:conical}
   Suppose $M$ is a pre-ordered monoid. If $0$ is the minimal element of $M$,
   then 
   \begin{enumerate}
      \item[$(i)$] $M$ is conical,
      \item[$(ii)$] the only unit of $M$ is $0$, and 
      \item[$(iii)$] for all $s, t\in M$, if $s\preceq_+t$, then $s\preceq t$. 
   \end{enumerate} 
\end{lem}

\begin{proof}
   All three statements follow from the formula: $s = s + 0 \preceq s + u = t$. 
\end{proof}

We will need to show that in a conically pre-ordered monoid $M$, there is no
infinite set of $\preceq_+$-antichains. To do this, we will reduce the statement
to the following lemma, which follows from the pigeonhole principle.

\begin{lem}\label{lem:antichain}
   Let $m\in\N$. If $S\subseteq \N^m$ such that $S$ is an $\preceq_+$-antichain,
   then $|S|<\infty$. 
\end{lem}

\begin{proof}
   The statement is clear if $m=1$, so we assume $m\geq 2$. If $s = (s_1, \dots,
   s_m)\in S$, then for each $k$, define $R_k(s) = \{ (t_1,\dots, t_m)\in \N^m
   \mid t_k < s_k \}$. Because $S$ is an $\preceq_+$-antichain, $S\setminus\{
   s\} \subseteq \bigcup_{k=1}^m R_k(s)$. If for each $k$, the intersection $I_k
   = (S\setminus\{s\}) \cap R_k(s)$ is finite, then $S$ is finite and we are
   done. Otherwise, there exists $k$ such that $I_k$ is infinite. Again, by the
   pigeonhole principle, there exists $r_k < s_k$ such that $\{(t_1,\dots, t_m)
   \in I_k \mid t_k=r_k \}$ is infinite. This implies the existence of an
   infinite $\preceq_+$-antichain in $\N^{m-1}$. Since this does not hold for
   $m=1$, $S$ is finite. 
\end{proof}  

\begin{definition}\label{def:unit-cancel}
   An element $s\in M$ is \emph{unit-cancellative} if $s+t = s$ implies that $t$
   is a unit. We say $s\in M$ is a \emph{sink} if it is not unit-cancellative.
\end{definition}

\begin{definition}\label{def:atom}
   An element $s\in M$ is an \emph{atom} if $s = t + u$ implies that either
   $t=0$ or $u=0$. 
\end{definition}

The next proposition is critical to a number of statements about the structure
of $(M,G)$-filters. We prove a finiteness property on the set of
unit-cancellative elements of conically pre-ordered monoids. The property is
well-studied in the context of non-unique factorization in rings of integers of
number fields, see \cite{GH:Book}*{Sections~1.1 \&~1.5}.

\begin{prop}\label{prop:unit-cancel}
   If $M$ is a conically pre-ordered monoid, then there exists a finite set of
   unit-cancellative atoms $\mathcal{U}$, generating all unit-cancellative
   elements of $M$.
\end{prop}

\begin{proof}
   First, we prove that if $s\in M$ is unit-cancellative and $s=t+u$ for $t,u\in M$, then $t$ and $u$ are unit-cancellative. Suppose $t+t'=t$ and $u+u'=u$. Then, 
   \[s = t+ u = t+t'+u+u' = s + t'+u'. \]
   Since the only unit of $M$ is $0$ by Lemma~\ref{lem:conical}, it follows that $t'+u'=0$. Moreover, since $M$ is conical, $t'=u'=0$. Thus, the unit-cancellative elements of $M$ decompose as a (possibly trivial) sum of unit-cancellative elements. 

   Since $M$ is finitely generated, let $\mathcal{S}$ be a minimal generating set for $M$, and suppose $\mathcal{U}\subseteq \mathcal{S}$ is the subset of unit-cancellative elements of $\mathcal{S}$. From the argument above, if $s\in M$ is unit-cancellative, then $s$ is contained in the monoid generated by $\mathcal{U}$. 

   Now we show that for a fixed $u\in \mathcal{U}$, if $s+t=u$, then either $s=0$ or $t=0$. By above, if $s+t=u$, then both $s$ and $t$ are unit-cancellative. Since the only unit of $M$ is $0$, by Lemma~\ref{lem:conical}, there exist $(a_v)_{v\in \mathcal{U}}, (b_v)_{v\in\mathcal{U}} \in\N^{|\mathcal{U}|}$ such that 
   \begin{align*}
      s &= \sum_{v\in\mathcal{U}} a_v\cdot v, & t &= \sum_{v\in\mathcal{U}} b_v\cdot v.
   \end{align*}
   By minimality of $\mathcal{S}$ and $\mathcal{U}$, either $s$ or $t$ is not generated by $\mathcal{U}\setminus \{u\}$, so in particular either $a_u$  or $b_u$ is nonzero. Suppose $b_u\ne 0$. Therefore, $b_u-1\in\N$, and 
   \[ u = s + t = \sum_{v\in\mathcal{U}}(a_v + b_v)v = \sum_{v\in\mathcal{U}\setminus \{u\}} (a_v+b_v)v + (a_u + b_u - 1)u + u,\]
   this implies that $a_v+b_v = 0$ for all $v\in\mathcal{U}\setminus \{u\}$ and $a_u+b_u-1=0$. Hence, $s=0$.
\end{proof}

We extend Proposition~\ref{prop:unit-cancel} a little further. Because there
exists a set of unit-cancellative atoms, there are only finitely many distinct
ways to express a unit-cancellative element as a sum of non-zero elements. 

\begin{cor}\label{cor:finite-express}
   If $M$ is conically pre-ordered and $s\in M$ is unit-cancellative, then the
   following set is finite
   \[ \Zmon(s) = \left\{ (t_1,\dots, t_k) \in (M\setminus \{0\})^k ~\middle|~ k\in\N, \sum_{i=1}^k t_i = s \right\}. \]
\end{cor}

\begin{proof}
   By Proposition~\ref{prop:unit-cancel}, it follows that $\Zmon(s)$ is finite if, and only if, 
   \[ \Nmon(s) = \left\{ (n_u)_{u\in\mathcal{U}}\in \N^{|\mathcal{U}|} ~\middle|~ \sum_{u\in\mathcal{U}} n_u\cdot u = s \right\} \]
   is finite. Suppose $\Nmon(s)$ is infinite. By Lemma~\ref{lem:antichain}, since $\mathcal{U}$ is finite, there exist distinct $(n_u)_{u\in\mathcal{U}},(n_u')_{u\in\mathcal{U}} \in\Nmon(s)$ such that $n_u\leq n_u'$, for all $u\in\mathcal{U}$.  This implies that $d_u = n_u' - n_u\in\N$ and for some $u\in\mathcal{U}$, $d_u \geq 1$. But since $s$ is unit-cancellative,
   \[ \sum_{u\in\mathcal{U}} n_u\cdot u = s = s + \sum_{u\in\mathcal{U}} d_u\cdot u = \sum_{u\in\mathcal{U}} n_u'\cdot u \]
   Thus, for all $u\in\mathcal{U}$, $d_u = 0$, a contradiction. Hence, $\Nmon(s)$ and $\Zmon(s)$ are finite.
\end{proof}

We are in a position to prove a critical statement about chains of
unit-cancellative elements in conically pre-ordered monoids that will be used in
the next section.

\begin{cor}\label{cor:unit-cancel-dcc-anti}
   If $M$ is a conically pre-ordered monoid, then 
   \begin{enumerate}
      \item[$(i)$] every descending $\preceq_+$-chain of unit-cancellative
      elements stabilizes, and 
      \item[$(ii)$] every $\preceq_+$-antichain of unit-cancellative elements is
      finite. 
   \end{enumerate}
\end{cor}

\begin{proof}
   For $(i)$, if $s_0\succeq_+ s_1\succeq_+\cdots$, then there exists $t_i\in M$
   such that $s_{i-1} = s_{i} + t_{i}$ for $i\geq 1$. Therefore, $s_0 = s_k +
   \sum_{j=1}^k t_j$, for every $k\geq 1$. By
   Corollary~\ref{cor:finite-express}, all but finitely many $t_i=0$. 

   For $(ii)$, suppose $\{s_i\}_{i\geq 1}\subseteq M$ is an
   $\preceq_+$-antichain. Let $\mathcal{U}\subseteq M$ be the unit-cancellative
   atoms of $M$. An infinite $\preceq_+$-antichain of unit-cancellative elements
   in $M$ is equivalent to an infinite $\preceq_+$-antichain in
   $\N^{|\mathcal{U}|}$. Since $\mathcal{U}$ is finite, this is impossible by
   Lemma~\ref{lem:antichain}. 
\end{proof}

\begin{remark}
   The Infinite Ramsey Theorem along with
   Corollary~\ref{cor:unit-cancel-dcc-anti} imply the following statement about
   $(M,G)$-filters $\phi$. The set $\im(\phi)$ is finite if, and only if,
   $\im(\phi)$ satisfies the descending chain condition.
\end{remark}

\section{Inert subgroups of filters}\label{sec:inert}

In Example~\ref{ex:trivial}, the filter $\phi$ has the property that
$L(\phi)=0$. This is an extreme example, but this illustrates a property we want
to repair. Recall that for a conical monoid $M$, $s\in M$ is unit-cancellative
if $s+t=s$ implies $t=0$, cf.\ Lemma~\ref{lem:conical} and
Definition~\ref{def:unit-cancel}. 

\begin{definition}
   An $(M, G)$-filter $\phi$ is \emph{progressive} if  $s\in M$ is
   a sink implies that $\phi_s = \langle 1\rangle$.
\end{definition}

From Section~\ref{sec:examples}, Examples~\ref{ex:trivial},~\ref{ex:infinite},
and~\ref{ex:just-bijection} are progressive filters, but
Example~\ref{ex:not-progressive} is not progressive. By definition, $\langle
1\rangle$ is not an inert subgroup of an $(M,G)$-filter $\phi$. We
now state our main theorems for this section; the combination of which implies
Theorem~\ref{thm:Main}.

\begin{thm}\label{thm:general-progressive}
   Suppose $\phi$ is a finite $(M, G)$-filter. If either $\phi$ is progressive
   or $\preceq$ is a partial order on $M$, then there exists a conically
   pre-ordered monoid $M'$ and a finite, inertia-free $(M',G)$-filter $\theta$
   such that $\im(\phi)\subseteq\im\left(\theta\right)$.
\end{thm}

\begin{thm}\label{thm:surjection}
   Suppose $\phi$ is a finite $(M,G)$-filter. If $\phi$ is inertia-free, then
   there exists a graded generating set $\mathcal{Y}$ for the abelian group
   $L(\phi)$ and a surjection $\pi_{\mathcal{Y}}: L(\phi) \rightarrow G$ of sets
   such that $\pi_{\mathcal{Y}}(\mathcal{Y})$ generates $G$.
\end{thm}

The next proposition is critical to working with inertia in filters. It gives us
a useful characterization of inert subgroups in this section and, in the coming
sections, gives us a foothold to apply Noetherian induction on the subgroups in
$\im(\phi)$. 

Recall the border set of an $(M, G)$-filter $\phi$, given
in~\eqref{eqn:relevant-set}, is denoted by $\mathcal{I}_{\phi}$. We characterize
the inertia-free filters by showing that $\phi_s\in \mathfrak{B}_n$, with
$\partial\phi_s\ne \langle 1\rangle$, implies that there exists a subset
$B\subseteq \mathfrak{B}_{n-1}$ and $I\subseteq\mathcal{I}_\phi$ such that
$B=\{ \phi_t\mid t\in I_s\}$. We do this constructively. If this property does
not hold for a particular $H\in B$, then we choose a better subset $C$ where $H$
is replaced by a subset of subgroups that generate $H$. If no such choice is
available, then $\phi_s$ must be an inert subgroup of $\phi$. We prove this by
induction, moving up the $\mathfrak{B}$-chain.

\begin{prop}\label{prop:inert}
   Suppose $\phi$ is a finite $(M, G)$-filter. For all $s\in M$,
   there exists $I_s\subseteq \mathcal{I}_\phi$ such that
   $\partial\phi_s=\langle \phi_t\mid t\in I_s\rangle$ if, and only if,
   $\im(\phi)=\mathfrak{B}$.
\end{prop}

\begin{proof}
   Suppose first, via contradiction, that $\phi_s\notin \mathfrak{B}$. Since
   $\phi$ is finite, we assume $\phi_s$ is minimal. By the assumption, there
   exists $I_s\subseteq \mathcal{I}_\phi$ such that $\partial\phi_s=\langle
   \phi_t\mid t\in I_s\rangle$. If for every $t\in I_s$, $\phi_t\in
   \mathfrak{B}$, then $\phi_s\in\mathfrak{B}$. Therefore, since
   $\phi_s\notin\mathfrak{B}$, we choose $t\in I_s$ such that
   $\phi_{t}\notin\mathfrak{B}$. Since $t\in\mathcal{I}_\phi$, $\phi_{t} >
   \partial\phi_{t}$, so 
   \[ 
      \phi_s\geq\partial\phi_s\geq \phi_{t}>\partial\phi_{t}.
   \]
   By assumption, there exists $I_{t}\subseteq\mathcal{I}_\phi$ such that
   $\partial\phi_{t}=\langle \phi_u \mid u\in I_{t}\rangle$. By minimality of
   $\phi_s$, for each $u\in I_t$, $\phi_u\in \mathfrak{B}$. This implies that
   $\phi_t\in\mathfrak{B}$, which is a contradiction. Therefore,
   $\phi_s\in\mathfrak{B}$ for all $s\in M$.  

   Conversely, suppose $\im(\phi)=\mathfrak{B}$. If there exists $\phi_s\in
   \im(\phi)$ such that for all $I_s\subseteq \mathcal{I}_\phi$ where
   $\partial\phi_s \ne \langle \phi_t\mid t\in I_s\rangle$, then there exists a
   minimal such $\phi_s\in \im(\phi)$ as $\phi$ is finite. We fix this $\phi_s$.
   Since $\phi_s\in\mathfrak{B}$, it follows that there exists
   $B\subset\bigcup_{i\geq 0}\mathfrak{B}_i$ such that $\partial\phi_s=\langle
   B\rangle$, and without loss of generality, we assume that $B$ is minimal.
   Therefore, we have two cases. Because $B$ is minimal, either $B=\{\phi_s\}$
   or $\phi_s\notin B$. 

   First, suppose $B=\{\phi_s\}$, and suppose that $B\subseteq \mathfrak{B}_m$,
   where $m$ is minimal. Therefore, there exists $C\subseteq \mathfrak{B}_{m-1}$
   such that $\partial\phi_s = \langle C\rangle$. By the minimality of $m$,
   $\phi_s\notin C$. Hence, without loss of generality, we may assume
   $\phi_s\notin B$. By minimality of $\phi_s$, every $\phi_t\in B$ satisfies
   $(i)$. Therefore, there exists $I_t\subseteq \mathcal{I}_\phi$ such that
   $\partial\phi_t = \langle \phi_u \mid u\in I_t\rangle$. If
   $t\notin\mathcal{I}_\phi$, then $\partial\phi_t=\phi_t$. Therefore, we can
   replace $B$ with $C = (B \setminus \{\phi_t\} )\cup \{ \phi_u \mid u\in
   I_t\}$ and $\partial\phi_s=\langle C\rangle$. Applying this to every
   $\phi_t\in B$ such that $t\notin\mathcal{I}_\phi$ yields a new set that
   satisfies $(i)$.  Therefore, this holds for all $s\in M$. 
\end{proof}

\subsection{Refreshing filters}\label{sec:refreshing-filters}

In this subsection we show how to remove the inertia from finite, progressive
$(M, G)$-filters $\phi$. To do this, we attempt to make every nontrivial
subgroup have finite support on $M$. We accomplish this by constructing a new
filter $\theta$ with a two step process which can be seen as forcing the last
two filter properties from Definition~\ref{def:filter}. The first step
guarantees $[\theta_s,\theta_t]\leq \theta_{s+t}$, and the second step forces
the order-reversing property: $s\preceq t$ implies $\theta_s\geq \theta_t$.

Throughout the remainder of this subsection, we fix a minimal set of generators
for $M$, denoted by $\mathcal{S}$. For $E\subseteq M$, define a set of
restricted partitions of $M$ as follows. If $s\in M$, then define the set of
\emph{$E$-excluded partitions} of $s$ in $M$ to be
\begin{align*}\label{eqn:restricted-part} 
   \respart(s) &= \left\{ (r_1,\dots,r_k) ~\middle|~ k\in\N, r_i\in (M\setminus E)\cup\mathcal{S},\; r_1+\cdots+r_k=s \right\}. 
\end{align*}
For $E\subseteq M$, we define a function, $\nu=\nu(E)$, from $M$ into $\Nor(G)$.
For each $s\in M$, set
\begin{equation}\label{eqn:nu-subgroup}
   \nu_s = \prod_{{\bf r}\in \respart(s)} [\phi_{{\bf r}}],
\end{equation}
where $[\phi_{{\bf r}}]=[\phi_{r_1},\dots,\phi_{r_k}]$. Observe that for every
$s\in M$, the set $\respart(s)\ne\emptyset$ because $M=\langle
\mathcal{S}\rangle$. Although this definition depends on $\mathcal{S}$, we do
not explicitly mention $\mathcal{S}$ as it is assumed to be fixed. The next
lemma shows that $\nu$ and $\phi$ are equal on $(M\setminus E)\cup \mathcal{S}$.

\begin{lem}\label{lem:nu-leq}
   Suppose $\phi$ is an $(M, G)$-filter. If $E\subseteq M$ and $\nu=\nu(E)$ is
   defined as in~\eqref{eqn:nu-subgroup}, then for all $s\in M$, $\nu_s\leq
   \phi_s$. If $s\in (M\setminus E)\cup \mathcal{S}$, then equality holds. 
\end{lem}

\begin{proof}
   For $s\in M$, let $\textbf{r}\in \respart(s)$. Since $\phi$ is an $(M,
   G)$-filter, $[\phi_{\textbf{r}}]\leq \phi_s$. Therefore, $\nu_s\leq\phi_s$,
   for all $s\in M$. If $s\in (M\setminus E)\cup\mathcal{S}$, then $\textbf{r} =
   (s)\in \respart(s)$. Hence, $\nu_s\geq [\phi_{\textbf{r}}] = \phi_s$. Thus,
   the lemma follows. 
\end{proof}

Lemma~\ref{lem:nu-leq} hints at the reason why $\nu$ might fail to be an $(M,
G)$-filter: we cannot guarantee that $\nu$ is order-reversing. We include
another layer to our construction to get an $(M,G)$-filter. Define a function
$\widetilde{\nu}$ from $M$ into $\Nor(G)$, such that 
\begin{equation}\label{eqn:nu-filter} 
   \widetilde{\nu}_s = \prod_{s\preceq t}\nu_t = \prod_{s\preceq t}\left( \prod_{{\bf r}\in  \respart(t)} [\phi_{{\bf r}}]\right).
\end{equation}

At this point, we pause to give a map of the remainder of the subsection. The
first major destination is Proposition~\ref{prop:hat-filter}, which proves that
$\widetilde{\nu}$ is an $(M, G)$-filter with the properties we want. The coming
four lemmas work towards this goal. After Proposition~\ref{prop:hat-filter}, we
journey towards inertia-free. Here, we leverage Section~\ref{sec:monoids} to
show that, with a small change to $\widetilde{\nu}$, we get an inertia-free
filter. This involves investigating sequences in $M$. 

The next lemma is a useful extension of Lemma~\ref{lem:nu-leq}.

\begin{lem}\label{lem:hat-order}
   Suppose $\phi$ is an $(M, G)$-filter, $E\subseteq M$, $\nu=\nu(E)$, and
   $\widetilde{\nu}=\widetilde{\nu}(E)$. For $s\in (M\setminus
   E)\cup\mathcal{S}$, $\widetilde{\nu}_s = \phi_s$. 
\end{lem}

\begin{proof}
   By Lemma~\ref{lem:nu-leq}, $\nu_s = \phi_s$ for $s\in (M\setminus
   E)\cup\mathcal{S}$. Since $\phi$ is an $(M, G)$-filter, $s\preceq t$ implies
   $\phi_s\geq \phi_t$. Since $\nu_t\leq \phi_t$ for all $t\in M$, 
   \[ 
      \phi_s \leq \nu_s\prod_{s\prec t}\nu_t = \prod_{s\preceq t} \nu_t \leq \prod_{s\preceq t} \phi_t \leq \phi_s. 
   \] 
   By~\eqref{eqn:nu-filter}, $\phi_s\leq \widetilde{\nu}_s  = \prod_{s\preceq
   t}\nu_t \leq \phi_s$. 
\end{proof}

The difficult part of showing $[\widetilde{\nu}_s, \widetilde{\nu}_t]\leq
\widetilde{\nu}_{s+t}$, for $s,t\in M$, is showing this identity holds for
$\nu$, which is what we do first. The next lemma accomplishes this by applying
the Three Subgroups Lemma, cf.~\cite{Robinson:book}*{{\bf 5.1.10}}.

\begin{lem}\label{lem:nu-filter}
   Suppose $\phi$ is an $(M, G)$-filter. Let $E\subseteq M$ and $\nu=\nu(E)$. If
   $s,t\in M$, then $\left[\nu_s,\nu_t\right]\leq \nu_{s+t}$.
\end{lem}

\begin{proof}
   If $s+t\notin E$, then by Lemma~\ref{lem:nu-leq}, $\nu_s\leq\phi_s$,
   $\nu_t\leq\phi_t$, and $\nu_{s+t}=\phi_{s+t}$. Since $\phi$ is an $(M,
   G)$-filter, it follows that 
   \[ 
      [\nu_s,\nu_t]\leq [\phi_s,\phi_t]\leq \phi_{s+t}=\nu_{s+t}. 
   \]

   Now consider the case when $s+t\in E$. For $\textbf{s}\in \respart(s)$ and
   $\textbf{t}\in \respart(t)$, we prove 
   \begin{align}\label{eqn:nu-filter-reduced}
      [[\phi_{\textbf{s}}], [\phi_{\textbf{t}}]]\leq \nu_{s+t}
   \end{align} 
   which proves the lemma. We prove the inequality
   in~\eqref{eqn:nu-filter-reduced} by induction on the size of the partition of
   $\textbf{s}$. If $\textbf{s}=(s)$, then $s\in (M\setminus E)\cup\mathcal{S}$,
   so $\nu_s=\phi_s$ by Lemma~\ref{lem:nu-leq}. If $\textbf{t}=(t_1,\dots,
   t_\ell)\in \respart(t)$, then $(s, \textbf{t})\in \respart(s+t)$ and
   $\textbf{r} = (\textbf{t}, s)=(t_1,\dots,t_\ell, s)\in \respart(s+t)$. It
   follows that
   \[ 
      [\phi_s, [\phi_{\textbf{t}}]] = [[\phi_{{\bf t}}], \phi_s] = [\phi_{\textbf{r}}] \leq \nu_{s+t}.
   \]
   Therefore, $[\nu_t,\phi_s]\leq \nu_{s+t}$, and since $\phi_s=\nu_s$, 
   \[ 
      [\nu_s,\nu_t]= [\nu_t,\phi_s] \leq \nu_{s+t}.
   \]

   Now we proceed by induction on $k\geq 2$, the size of the partition ${\bf
   s}=(s_1,\dots,s_k)\in \respart(s)$. Let ${\bf s}'=(s_1,\dots,s_{k-1})$, and
   let $A=[\phi_{{\bf s}'}]$, $B=\phi_{s_k}$, and $C=[\phi_{{\bf t}}]$. Then
   \[ 
      [ [\phi_{{\bf s}}],[\phi_{{\bf t}}]] = [A,B,C].
   \]
   Since $({\bf s},{\bf t})\in \respart(s+t)$, all permutations of $({\bf
   s},{\bf t})$ are also contained in $ \respart(s+t)$. Hence,
   $(t_1,\dots,t_\ell,s_k,s_1,\dots,s_{k-1})\in \respart(s+t)$. If ${\bf
   t}'=(t_1,\dots,t_\ell,s_k)$, then by the induction hypothesis
   \[ 
      [B,C,A]=[C,B,A]=[[\phi_{{\bf t}'}], [\phi_{{\bf s}'}]]\leq \nu_{s+t}. 
   \]
   Although $-s_k$ may not be contained $M$, we let $s-s_k$ denote
   $s_1+\cdots+s_{k-1}$. Again, by the induction hypothesis
   \[ 
      [C,A,B] \leq [ \nu_{s-s_k+t}, \phi_{s_k}]\leq \nu_{s+t}.
   \]
   By the Three Subgroups Lemma,
   \[ 
      [[\phi_{{\bf s}}],[\phi_{{\bf t}}]] = [A,B,C] \leq [B,C,A][C,A,B]\leq \nu_{s+t}.
   \]
   Therefore, in this case, $[\nu_s,\nu_t]\leq \nu_{s+t}$.
\end{proof}

We fix the following notation. For $H\in\im(\phi)$, define 
\begin{align*}
   \supp{\phi}{H} &= \{ s\in M \mid \phi_s = H \}, \\ 
   \psupp{\phi}{H} &= \{ s+t \mid s\in \supp{\phi}{H}, t\ne 0\} \cap \supp{\phi}{H}. 
\end{align*}
We generalize this to subsets $\mathcal{H}\subseteq \im(\phi)$, where
$\supp{\phi}{\mathcal{H}}$ is the disjoint union of $\supp{\phi}{H}$, for
$H\in\mathcal{H}$ and similarly for $\psupp{\phi}{\mathcal{H}}$. We use the next
lemma to show that $\widetilde{\nu}$ is progressive.

\begin{lem}\label{lem:minimal-elt}
   If $\phi$ is a progressive $(M, G)$-filter, then for all
   $H\in\im(\phi)\setminus\{\langle 1\rangle\}$,
   $\supp{\phi}{H}\setminus\psupp{\phi}{H}$ is finite and nonempty.
\end{lem}

\begin{proof}
   Suppose $\supp{\phi}{H}=\psupp{\phi}{H}$, and let $s_0\in \supp{\phi}{H}$. As
   $s_0\in \psupp{\phi}{H}$, there exists $s_1\in \supp{\phi}{H}$ and $t_1\ne 0$
   such that $s_0=s_1+t_1$. By induction, there exists $s_{i+1}\in
   \supp{\phi}{H}$ and $t_{i+1}\ne 0$ such that $s_i = s_{i+1} + t_{i+1}$, for
   $i\geq 1$. Since $\phi$ is progressive and $H\ne \langle 1\rangle$, each
   $s_i$ is unit-cancellative. Therefore, $s_i\ne s_{i+1}$. Because $t_i\ne 0$
   for all $i\geq 1$, it follows that $s_0\succ_+ s_1 \succ_+ \cdots$ is an
   infinite descending chain in $M$ (and $\prec_+$ is transitive on the subset
   of unit-cancellative elements). By Corollary~\ref{cor:unit-cancel-dcc-anti},
   this is a contradiction. Therefore, $\supp{\phi}{H}\ne \psupp{\phi}{H}$. By
   definition, $\supp{\phi}{H}\setminus\psupp{\phi}{H}$ is an
   $\preceq_+$-antichain. By Corollary~\ref{cor:unit-cancel-dcc-anti},
   $\supp{\phi}{H}\setminus\psupp{\phi}{H}$ must be finite.
\end{proof}

\begin{prop}\label{prop:hat-filter}
   Suppose $\phi$ is a finite, progressive $(M, G)$-filter. If
   $\mathcal{H}\subseteq \im(\phi)\setminus\{\langle 1\rangle\}$,
   $E=\psupp{\phi}{\mathcal{H}}$, and $\widetilde{\nu}=\widetilde{\nu}(E)$, then
   $\widetilde{\nu}$ is a finite, progressive $(M, G)$-filter such that
   $\im(\phi)\subseteq\im\left(\widetilde{\nu}\right)$.
\end{prop}

\begin{proof}
   By Lemma~\ref{lem:nu-filter}, for all $s,t\in M$,
   \[ 
      \left[ \widetilde{\nu}_s,\widetilde{\nu}_t\right] = \prod_{s\preceq u}\prod_{t\preceq w} [\nu_u,\nu_w] \leq \prod_{s\preceq u}\prod_{t\preceq w} \nu_{u+w} \leq \widetilde{\nu}_{s+t} .
   \]
   If $s\preceq t$, then 
   \[ 
      \widetilde{\nu}_s = \prod_{s\preceq u} \nu_u\geq \prod_{s\preceq t\preceq w} \nu_w = \widetilde{\nu}_t.
   \]
   Therefore, $\widetilde{\nu}$ is an $(M, G)$-filter. By
   Lemma~\ref{lem:hat-order}, $\widetilde{\nu}_s=\phi_s$ for each $s\in
   (M\setminus E)\cup \mathcal{S}$. By Lemma~\ref{lem:minimal-elt}, since
   $\langle 1\rangle\notin \mathcal{H}$ and $\phi$ is progressive, for each
   $H\in\mathcal{H}$ there exists
   $t_H\in\supp{\phi}{H}\setminus\psupp{\phi}{H}$. Since
   $\psupp{\phi}{\mathcal{H}}$ is the disjoint union of sets $\psupp{\phi}{H}$,
   it follows that for all $H\in \mathcal{H}$, $t_H\in\supp{\phi}{\mathcal{H}}
   \setminus \psupp{\phi}{\mathcal{H}}$. Thus, $\phi_{t_H} = H =
   \widetilde{\nu}_{t_H}$, so
   $\im(\phi)\subseteq\im\left(\widetilde{\nu}\right)$. Since $\phi$ is finite,
   so is $\widetilde{\nu}$. Because $\langle 1\rangle\notin\mathcal{H}$, all
   sinks $s\in M$ are contained in $M\setminus E$. From
   Lemma~\ref{lem:hat-order}, since $s$ is a sink,
   $\widetilde{\nu}_s=\phi_s=\langle 1\rangle$, so $\widetilde{\nu}$ is
   progressive. 
\end{proof}

Now we need some lemmas toward inertia. Recall, $\phi_s$ is not an inert
subgroup of $\phi$ if there exists $B\subseteq \mathfrak{B}$ such that
$\partial\phi_s = \langle B\rangle$. The way we show a subgroup $\phi_s$ is not
an inert subgroup of $\phi$ is to prove that $\partial\phi_s$ is generated by
subgroups strictly contained in $\phi_s$. Proving this for all $s\in M$,
together with the finiteness of $\phi$, yields our desired result: $\phi$ is
inertia-free. 

We define some length functions that we only use for the proof of the next
lemma. For $\textbf{r}\in \respart(x)$, let $\ell(\textbf{r})\in\N$ denote the
number of terms in $\textbf{r}$. Define 
\begin{equation*}
   \ell( \respart(x)) = \min_{\textbf{r}\in \respart(x)} \ell(\textbf{r}). 
\end{equation*}

\begin{lem}\label{lem:nu-finite-supp}
   Suppose $\phi$ is a finite, progressive $(M,G)$-filter. Let
   $\mathcal{H}=\im(\phi)\setminus\{\langle 1\rangle\}$,
   $E=\psupp{\phi}{\mathcal{H}}$, and $\nu=\nu(E)$. For all nontrivial
   $H\in\im(\nu)$, $\supp{\nu}{H}$ is finite. 
\end{lem}

\begin{proof}
   By Lemma~\ref{lem:minimal-elt}, $\supp{\phi}{H}\setminus\psupp{\phi}{H}$ is
   finite and possibly empty if $H\notin\im(\phi)$. So if $|\supp{\nu}{H}|$ is
   infinite, then there exists an infinite set $T\subseteq \supp{\nu}{H}$ such
   that $T\subseteq E = \psupp{\phi}{\mathcal{H}}$. Since $\phi$ is progressive,
   $T$ only contains unit-cancellative elements. Therefore, for all $t\in T$ and
   every $\textbf{s}=(s_1, \dots, s_k)\in R_E(t)$, each $s_i$ is
   unit-cancellative by Proposition~\ref{prop:unit-cancel}. Since $\phi$ is
   finite and $M$ is finitely generated, the set $M\setminus (E \cup
   \supp{\phi}{\langle 1\rangle})\cup \mathcal{S}$ is finite by
   Lemma~\ref{lem:minimal-elt} and generates every element of the infinite set
   $T$. Thus, for all $N\geq 1$ there exists only finitely many $t\in T$ such
   that $\ell(R_E(t))\leq N$. Since $G$ is nilpotent, there exists $c\geq 1$
   such that all commutators of weight $c+1$ are trivial. Therefore, there are
   only finitely $t\in T$ such that 
   \begin{align*}
      H &= \prod_{\textbf{s} \in \respart(t)} [\phi_{\textbf{s}}],
   \end{align*}
   which is a contradiction. Hence, no such $T$ exists. 
\end{proof}

\begin{lem}\label{lem:finite-supp}
   Let $\phi$ be a progressive $(M, G)$-filter and
   $H\in\im(\phi)\setminus\{\langle 1\rangle\}$. If $|\supp{\phi}{H}|$ is
   finite, then there exists $s\in \supp{\phi}{H}$ and $I_s\subseteq M$ such
   that for all $t\in I_s$, $\phi_t < H$ and $\partial\phi_s = \langle \phi_t
   \mid t\in I_s\rangle$. 
\end{lem}

\begin{proof}
   Let $s\in \supp{\phi}{H}$ be maximal with respect to $\preceq$. We claim that
   $I_s = \{s+t\mid t\ne 0\}$ suffices. Since $\phi$ is progressive and $H\ne
   \langle 1\rangle$, the element $s$ is not a sink. In particular, $s\notin
   I_s$. By the maximality of $s$, for all $t\in I_s$, it follows that $\phi_t <
   H$, and by definition $\partial\phi_s = \langle \phi_t \mid t\in I_s\rangle$.
\end{proof}

Now we are ready to prove that we can refresh inert subgroups and construct more
vigorous filters. 

\begin{thm}\label{thm:nu-filter}
   If $\phi$ is a finite progressive $(M, G)$-filter, then there exists a finite
   progressive, inertia-free $(M, G)$-filter $\widetilde{\nu}$ such that
   $\im(\phi)\subseteq\im\left(\widetilde{\nu}\right)$. 
\end{thm}

\begin{proof}
   We first show that all nontrivial minimal subgroups are not inert subgroups
   of $\widetilde{\nu}$. Then we show that if $\widetilde{\nu}_s$ has the
   property that every $\widetilde{\nu}_{s+t}<\widetilde{\nu}_s$ is not an inert
   subgroup of $\widetilde{\nu}$, then $\widetilde{\nu}_s$ is not an inert
   subgroup of $\widetilde{\nu}$. Let $\mathcal{H}=\im(\phi)\setminus\{\langle
   1\rangle\}$. Set $E = \psupp{\phi}{\mathcal{H}}$, and define $\nu=\nu(E)$.
   Let $\widetilde{\nu}=\widetilde{\nu}(E)$ denote the $(M, G)$-filter from
   Proposition~\ref{prop:hat-filter}, see also~\eqref{eqn:nu-filter}. By
   Proposition~\ref{prop:hat-filter}, $\widetilde{\nu}$ is finite, progressive,
   and $\im(\phi)\subseteq\im(\widetilde{\nu})$. It remains to prove that
   $\widetilde{\nu}$ is inertia-free, and we do so by induction up the
   $\mathfrak{B}$-sequence. The base case, $\langle 1\rangle$, is not an inert
   subgroup by definition. Suppose $n\geq 0$, and let
   $H\in\im(\widetilde{\nu})\setminus\mathfrak{B}_n$ be minimal. We show that
   $H\in \mathfrak{B}_{n+1}$. This implies that $\widetilde{\nu}$ is
   inertia-free since $\widetilde{\nu}$ is finite. 
   
   By the minimality of $H$, it is enough to show that there exists $s\in
   \supp{\widetilde{\nu}}{H}$ such that $\partial\widetilde{\nu}_s$ is generated
   by subgroups $\widetilde{\nu}_{s+t} < H$. If $|\supp{\widetilde{\nu}}{H}|$ is
   finite, then apply Lemma~\ref{lem:finite-supp}, and if there exists
   $s\in\supp{\widetilde{\nu}}{H}$ such that $\partial\widetilde{\nu}_s\ne
   \widetilde{\nu}_s$, then every $\widetilde{\nu}_{s+t} < H$ for $t\ne 0$. In
   both cases, we conclude $H\in\mathfrak{B}_{n+1}$. Therefore, we assume that
   $|\supp{\widetilde{\nu}}{H}|$ has infinite cardinality and for all
   $s\in\supp{\widetilde{\nu}}{H}$, $\partial\widetilde{\nu}_s=H$. 

   By Lemma~\ref{lem:nu-finite-supp}, every nontrivial $K\in\im(\nu)$ has finite
   support. Since for all $s\in\supp{\widetilde{\nu}}{H}$, 
   \begin{align}\label{eqn:H}
      H &= \prod_{s\preceq t} \nu_t,
   \end{align}
   all but finitely many $t$ from~\eqref{eqn:H} satisfy $\nu_t \ne \langle
   1\rangle$. Thus, there exists a finite (and hence minimal) set $T\subseteq M$
   such that for all $s\in \supp{\widetilde{\nu}}{H}$ and for all $t\in T$,
   either $s\preceq t$ or $s$ and $t$ are incomparable and $H = \prod_{t\in T}
   \nu_t$. By definition, 
   \[ 
      H = \prod_{\substack{t\in T \\ t\preceq u}} \nu_u 
      = \prod_{t\in T} \widetilde{\nu}_t. 
   \] 
   If for all $t\in T$, $\widetilde{\nu}_t <H$, then we are done. On the other
   hand, if there exists $t\in T$ such that $\widetilde{\nu}_t=H$, then $t$ is a
   maximal element of $\supp{\widetilde{\nu}}{H}$. In particular, for all $u\in
   M$ with $u\ne 0$, $\widetilde{\nu}_{t+u} < H$. Thus,
   $H\in\mathfrak{B}_{n+1}\subseteq \mathfrak{B}$. 
\end{proof}

\begin{remark}
   It is unknown what conditions are sufficient for $\phi$ to guarantee that
   $\widetilde{\nu}$ from Theorem~\ref{thm:nu-filter} is faithful.
   Example~\ref{ex:not-faithful} shows that $\widetilde{\nu}$ need not be
   faithful. In the context of algorithms for isomorphism testing, an algorithm
   that refines filters and always returns a faithful filter would be
   sufficient. However, it may not be possible for such an algorithm to always
   refine; it may have to remove some subgroups from the image.  
\end{remark}

\subsection{Some examples}\label{sec:some-examples}

In Section~\ref{sec:examples}, there are three examples of filters with inert
subgroups, two of which are progressive. We illustrate the construction of
$\widetilde{\nu}$ with these progressive filters.

\begin{ex}
   First we consider Example~\ref{ex:trivial}. Suppose $G=H(\mathbb{Z})$ and
   $M=(\N^2,\preceq_\ell)$, ordered by the lex ordering. A generating set for
   $M$ is $\mathcal{S}=\{(1,0), (0, 1)\}$. The following function is an $(M,
   G)$-filter given by 
   \[ 
      \phi_s = \left\{ \begin{array}{ll} 
         G & s\prec_\ell (2,0), \\ 
         G' & (2,0)\preceq_\ell s \prec_\ell (3,0), \\ 
         1 & (3,0) \preceq_\ell s, 
      \end{array}\right. 
   \]
   and every subgroup in $\im(\phi)$ has infinite support. Set $\mathcal{H}=\{G,
   G'\}$, so 
   \[ 
      \psupp{\phi}{\mathcal{H}} = \{s \in M \mid (0, 0)\prec_\ell s\prec_\ell (2,0) \} \cup \{ s\in M \mid (2,0)\prec_\ell s\prec_\ell (3,0) \} . 
   \]
   With $E=\psupp{\phi}{\mathcal{H}}$, we plot both $\nu=\nu(E)$ and
   $\widetilde{\nu}=\widetilde{\nu}(E)$. The $(\N^2,G)$-filter we construct is
   the same as the filter we would construct with the tools
   from~\cite{M:efficient-filters} since $M$ is totally ordered by $\preceq$.
   
   \begin{figure}[ht]
      \centering
      \begin{subfigure}[b]{0.3\textwidth}
         \centering
         \begin{tikzpicture}
            \pgfmathsetmacro{\myscale}{0.5}
            % axes
            \node (ptx) at (-0.328,-0.2) {};
            \node (pty) at (-0.2,-0.328) {};
            \node (x) at (0.4 + 3*\myscale,-0.2) {};
            \node (y) at (-0.2, 0.45 + 3*\myscale) {};
            \draw[->] (ptx) -- (x);
            \draw[->] (pty) -- (y);
         
            % labels
            \node (x0) at (0,-0.5) {0};
            \node (x1) at (\myscale,-0.5) {1};
            \node (x2) at (2*\myscale,-0.5) {2};
            \node (x3) at (3*\myscale,-0.5) {3};
            \node (y0) at (-0.5,0) {0};
            \node (y1) at (-0.5,\myscale) {1};
            \node (y2) at (-0.5,2*\myscale) {2};
            \node (y3) at (-0.5,3*\myscale) {3};
            
            % subgroups
            % y=0:
            \node (G) at (0,0) {$G$};
            \node (H) at (\myscale,0) {$G$};
            \node (G3) at (2*\myscale,0) {$G'$};
            \node (G4) at (3*\myscale,0) {$1$};
            % y=1:
            \node (K) at (0,\myscale) {$G$};
            \node (G2) at (\myscale,\myscale) {$G$};
            \node (G32) at (2*\myscale,\myscale) {$G'$};
            \node (G42) at (3*\myscale,\myscale) {$1$};
            % y=2:
            \node (L) at (0,2*\myscale) {$G$};
            \node (G33) at (\myscale,2*\myscale) {$G$};
            \node (G43) at (2*\myscale,2*\myscale) {$G'$};
            \node (G44) at (3*\myscale,2*\myscale) {$1$};
            % y=3:
            \node (L) at (0,3*\myscale) {$G$};
            \node (G33) at (\myscale,3*\myscale) {$G$};
            \node (G43) at (2*\myscale,3*\myscale) {$G'$};
            \node (G44) at (3*\myscale,3*\myscale) {$1$};
         \end{tikzpicture}
         \caption{$\phi$}
         \label{fig:trivial-phi}
      \end{subfigure}\quad%
      \begin{subfigure}[b]{0.3\textwidth}
         \centering
         \begin{tikzpicture}
            \pgfmathsetmacro{\myscale}{0.5}
            % axes
            \node (ptx) at (-0.328,-0.2) {};
            \node (pty) at (-0.2,-0.328) {};
            \node (x) at (0.4 + 3*\myscale,-0.2) {};
            \node (y) at (-0.2, 0.45 + 3*\myscale) {};
            \draw[->] (ptx) -- (x);
            \draw[->] (pty) -- (y);
         
            % labels
            \node (x0) at (0,-0.5) {0};
            \node (x1) at (\myscale,-0.5) {1};
            \node (x2) at (2*\myscale,-0.5) {2};
            \node (x3) at (3*\myscale,-0.5) {3};
            \node (y0) at (-0.5,0) {0};
            \node (y1) at (-0.5,\myscale) {1};
            \node (y2) at (-0.5,2*\myscale) {2};
            \node (y3) at (-0.5,3*\myscale) {3};
            
            % subgroups
            % y=0:
            \node[inner sep=2pt, circle, draw=black] (G) at (0,0) {$G$};
            \node[inner sep=2pt, circle, draw=black] (H) at (\myscale,0) {$G$};
            \node[inner sep=1pt, circle, draw=black] (G3) at (2*\myscale,0) {$G'$};
            \node (G4) at (3*\myscale,0) {$1$};
            % y=1:
            \node[inner sep=2pt, circle, draw=black] (K) at (0,\myscale) {$G$};
            \node (G2) at (\myscale,\myscale) {$G'$};
            \node (G32) at (2*\myscale,\myscale) {$1$};
            \node (G42) at (3*\myscale,\myscale) {$1$};
            % y=2:
            \node (L) at (0,2*\myscale) {$G'$};
            \node (G33) at (\myscale,2*\myscale) {$1$};
            \node (G43) at (2*\myscale,2*\myscale) {$1$};
            \node (G44) at (3*\myscale,2*\myscale) {$1$};
            % y=3:
            \node (L) at (0,3*\myscale) {$1$};
            \node (G33) at (\myscale,3*\myscale) {$1$};
            \node (G43) at (2*\myscale,3*\myscale) {$1$};
            \node (G44) at (3*\myscale,3*\myscale) {$1$};
         \end{tikzpicture}
         \caption{$\nu$}
         \label{fig:trivial-nu}
      \end{subfigure}\quad%
      \begin{subfigure}[b]{0.3\textwidth}
         \centering
         \begin{tikzpicture}
            \pgfmathsetmacro{\myscale}{0.5}
            % axes
            \node (ptx) at (-0.328,-0.2) {};
            \node (pty) at (-0.2,-0.328) {};
            \node (x) at (0.4 + 3*\myscale,-0.2) {};
            \node (y) at (-0.2, 0.45 + 3*\myscale) {};
            \draw[->] (ptx) -- (x);
            \draw[->] (pty) -- (y);
         
            % labels
            \node (x0) at (0,-0.5) {0};
            \node (x1) at (\myscale,-0.5) {1};
            \node (x2) at (2*\myscale,-0.5) {2};
            \node (x3) at (3*\myscale,-0.5) {3};
            \node (y0) at (-0.5,0) {0};
            \node (y1) at (-0.5,\myscale) {1};
            \node (y2) at (-0.5,2*\myscale) {2};
            \node (y3) at (-0.5,3*\myscale) {3};
            
            % subgroups
            % y=0:
            \node (G) at (0,0) {$G$};
            \node (H) at (\myscale,0) {$G$};
            \node (G3) at (2*\myscale,0) {$G'$};
            \node (G4) at (3*\myscale,0) {$1$};
            % y=1:
            \node (K) at (0,\myscale) {$G$};
            \node (G2) at (\myscale,\myscale) {$G'$};
            \node (G32) at (2*\myscale,\myscale) {$1$};
            \node (G42) at (3*\myscale,\myscale) {$1$};
            % y=2:
            \node (L) at (0,2*\myscale) {$G$};
            \node (G33) at (\myscale,2*\myscale) {$G'$};
            \node (G43) at (2*\myscale,2*\myscale) {$1$};
            \node (G44) at (3*\myscale,2*\myscale) {$1$};
            % y=3:
            \node (L) at (0,3*\myscale) {$G$};
            \node (G33) at (\myscale,3*\myscale) {$G'$};
            \node (G43) at (2*\myscale,3*\myscale) {$1$};
            \node (G44) at (3*\myscale,3*\myscale) {$1$};
         \end{tikzpicture}
         \caption{$\widetilde{\nu}$}\label{fig:both-nu}
         \label{fig:trivial-tilde-nu}
      \end{subfigure}
      \caption{We refresh the $(M, G)$-filter $\phi$, defined in Example~\ref{ex:trivial}. First, we construct the function $\nu$ in Figure~\ref{fig:trivial-nu}. We circle the elements in $(M\setminus E)\cup\mathcal{S}$ that do not evaluate to the trivial group. Then we construct the $(M,G)$-filter $\widetilde{\nu}$, as seen in Figure~\ref{fig:trivial-tilde-nu}.}
      \label{fig:refresh-trivial}
   \end{figure}
\end{ex}

\begin{ex}\label{ex:not-faithful}
   Now we consider Example~\ref{ex:just-bijection}. Recall, $G=H(K)$, for some finite field $K$, $M=(\N^2, \preceq_+)$, and for all $s\in M$, 
   \begin{align*}
      \phi_s &= \left\{ \begin{array}{ll}
         G & i = 0, \\
         G' & i = 1\text{ or } i+j\leq 4, \\
         1 & \text{otherwise}.
      \end{array}\right.
   \end{align*}
   Again, we set $\mathcal{H}=\{G, G'\}$, so 
   \[ \psupp{\phi}{\mathcal{H}} = \{(0, j) \in M\mid j\geq 1 \} \cup \{ (i, j)\in M \mid 2\leq i+j \leq 4\; \text{ or }\; j\geq i = 1 \} . \]
   All the functions are plotted in Figure~\ref{fig:refresh-bijection}. In particular, $\widetilde{\nu}$ is not a faithful filter as 
   \begin{align*}
      \widetilde{\nu}_{(1,0)} \setminus \partial\widetilde{\nu}_{(1,0)} = \widetilde{\nu}_{(0,2)} \setminus \partial\widetilde{\nu}_{(0,2)} = G' \setminus \{1\}.
   \end{align*}
   Indeed, $L(\widetilde{\nu}) \cong G/G'\oplus G' \oplus G'$, which is not in bijection with $G$. 

   \begin{figure}[ht]
      \centering
      \begin{subfigure}[b]{0.3\textwidth}
         \centering
            \begin{tikzpicture}
               \pgfmathsetmacro{\myscale}{0.5}
               % axes
               \node (ptx) at (-0.328,-0.2) {};
               \node (pty) at (-0.2,-0.328) {};
               \node (x) at (0.4 + 5*\myscale,-0.2) {};
               \node (y) at (-0.2, 0.45 + 4*\myscale) {};
               \draw[->] (ptx) -- (x);
               \draw[->] (pty) -- (y);
            
               % labels
               \node (x0) at (0,-0.5) {0};
               \node (x1) at (\myscale,-0.5) {1};
               \node (x2) at (2*\myscale,-0.5) {2};
               \node (x3) at (3*\myscale,-0.5) {3};
               \node (x4) at (4*\myscale,-0.5) {4};
               \node (x5) at (5*\myscale,-0.5) {5};
               \node (y0) at (-0.5,0) {0};
               \node (y1) at (-0.5,\myscale) {1};
               \node (y2) at (-0.5,2*\myscale) {2};
               \node (y3) at (-0.5,3*\myscale) {3};
               \node (y4) at (-0.5,4*\myscale) {4};
               
               % subgroups
               % y=0:
               \node (G) at (0,0) {$G$};
               \node (H) at (\myscale,0) {$G'$};
               \node (G3) at (2*\myscale,0) {$G'$};
               \node (G4) at (3*\myscale,0) {$G'$};
               \node (G40) at (4*\myscale,0) {$G'$};
               \node (G40) at (5*\myscale,0) {$1$};
               % y=1:
               \node (K) at (0,\myscale) {$G$};
               \node (G2) at (\myscale,\myscale) {$G'$};
               \node (G32) at (2*\myscale,\myscale) {$G'$};
               \node (G42) at (3*\myscale,\myscale) {$G'$};
               \node (G41) at (4*\myscale,\myscale) {$1$};
               \node (G41) at (5*\myscale,\myscale) {$1$};
               % y=2:
               \node (L) at (0,2*\myscale) {$G$};
               \node (G33) at (\myscale,2*\myscale) {$G'$};
               \node (G43) at (2*\myscale,2*\myscale) {$G'$};
               \node (G44) at (3*\myscale,2*\myscale) {$1$};
               \node (G43) at (4*\myscale,2*\myscale) {$1$};
               \node (G43) at (5*\myscale,2*\myscale) {$1$};
               % % y=3:
               \node (14) at (0,3*\myscale) {$G$};
               \node (15) at (\myscale,3*\myscale) {$G'$};
               \node (16) at (2*\myscale,3*\myscale) {$1$};
               \node (17) at (3*\myscale,3*\myscale) {$1$};
               \node (18) at (4*\myscale,3*\myscale) {$1$};
               \node (18) at (5*\myscale,3*\myscale) {$1$};
               % % y=4:
               \node (14) at (0,4*\myscale) {$G$};
               \node (15) at (\myscale,4*\myscale) {$G'$};
               \node (16) at (2*\myscale,4*\myscale) {$1$};
               \node (17) at (3*\myscale,4*\myscale) {$1$};
               \node (18) at (4*\myscale,4*\myscale) {$1$};
               \node (18) at (5*\myscale,4*\myscale) {$1$};
            \end{tikzpicture}
            \caption{$\phi$}
            \label{fig:bijection-phi}
      \end{subfigure}\quad%
      \begin{subfigure}[b]{0.3\textwidth}
         \centering
         \begin{tikzpicture}
            \pgfmathsetmacro{\myscale}{0.5}
            % axes
            \node (ptx) at (-0.328,-0.2) {};
            \node (pty) at (-0.2,-0.328) {};
            \node (x) at (0.4 + 5*\myscale,-0.2) {};
            \node (y) at (-0.2, 0.45 + 4*\myscale) {};
            \draw[->] (ptx) -- (x);
            \draw[->] (pty) -- (y);
         
            % labels
            \node (x0) at (0,-0.5) {0};
            \node (x1) at (\myscale,-0.5) {1};
            \node (x2) at (2*\myscale,-0.5) {2};
            \node (x3) at (3*\myscale,-0.5) {3};
            \node (x4) at (4*\myscale,-0.5) {4};
            \node (x5) at (5*\myscale,-0.5) {5};
            \node (y0) at (-0.5,0) {0};
            \node (y1) at (-0.5,\myscale) {1};
            \node (y2) at (-0.5,2*\myscale) {2};
            \node (y3) at (-0.5,3*\myscale) {3};
            \node (y4) at (-0.5,4*\myscale) {4};
            
            % subgroups
            % y=0:
            \node[inner sep=2pt, circle, draw=black] (G) at (0,0) {$G$};
            \node[inner sep=1pt, circle, draw=black] (H) at (\myscale,0) {$G'$};
            \node (G3) at (2*\myscale,0) {$1$};
            \node (G4) at (3*\myscale,0) {$1$};
            \node (G40) at (4*\myscale,0) {$1$};
            \node (G40) at (5*\myscale,0) {$1$};
            % y=1:
            \node[inner sep=2pt, circle, draw=black] (K) at (0,\myscale) {$G$};
            \node (G2) at (\myscale,\myscale) {$1$};
            \node (G32) at (2*\myscale,\myscale) {$1$};
            \node (G42) at (3*\myscale,\myscale) {$1$};
            \node (G41) at (4*\myscale,\myscale) {$1$};
            \node (G41) at (5*\myscale,\myscale) {$1$};
            % y=2:
            \node (L) at (0,2*\myscale) {$G'$};
            \node (G33) at (\myscale,2*\myscale) {$1$};
            \node (G43) at (2*\myscale,2*\myscale) {$1$};
            \node (G44) at (3*\myscale,2*\myscale) {$1$};
            \node (G43) at (4*\myscale,2*\myscale) {$1$};
            \node (G43) at (5*\myscale,2*\myscale) {$1$};
            % % y=3:
            \node (14) at (0,3*\myscale) {$1$};
            \node (15) at (\myscale,3*\myscale) {$1$};
            \node (16) at (2*\myscale,3*\myscale) {$1$};
            \node (17) at (3*\myscale,3*\myscale) {$1$};
            \node (18) at (4*\myscale,3*\myscale) {$1$};
            \node (18) at (5*\myscale,3*\myscale) {$1$};
            % % y=4:
            \node (14) at (0,4*\myscale) {$1$};
            \node (15) at (\myscale,4*\myscale) {$1$};
            \node (16) at (2*\myscale,4*\myscale) {$1$};
            \node (17) at (3*\myscale,4*\myscale) {$1$};
            \node (18) at (4*\myscale,4*\myscale) {$1$};
            \node (18) at (5*\myscale,4*\myscale) {$1$};
         \end{tikzpicture}
         \caption{$\nu$}
         \label{fig:bijection-nu}
      \end{subfigure}\quad%
      \begin{subfigure}[b]{0.3\textwidth}
         \centering
         \begin{tikzpicture}
            \pgfmathsetmacro{\myscale}{0.5}
            % axes
            \node (ptx) at (-0.328,-0.2) {};
            \node (pty) at (-0.2,-0.328) {};
            \node (x) at (0.4 + 5*\myscale,-0.2) {};
            \node (y) at (-0.2, 0.45 + 4*\myscale) {};
            \draw[->] (ptx) -- (x);
            \draw[->] (pty) -- (y);
         
            % labels
            \node (x0) at (0,-0.5) {0};
            \node (x1) at (\myscale,-0.5) {1};
            \node (x2) at (2*\myscale,-0.5) {2};
            \node (x3) at (3*\myscale,-0.5) {3};
            \node (x4) at (4*\myscale,-0.5) {4};
            \node (x5) at (5*\myscale,-0.5) {5};
            \node (y0) at (-0.5,0) {0};
            \node (y1) at (-0.5,\myscale) {1};
            \node (y2) at (-0.5,2*\myscale) {2};
            \node (y3) at (-0.5,3*\myscale) {3};
            \node (y4) at (-0.5,4*\myscale) {4};
            
            % subgroups
            % y=0:
            \node (G) at (0,0) {$G$};
            \node (H) at (\myscale,0) {$G'$};
            \node (G3) at (2*\myscale,0) {$1$};
            \node (G4) at (3*\myscale,0) {$1$};
            \node (G40) at (4*\myscale,0) {$1$};
            \node (G40) at (5*\myscale,0) {$1$};
            % y=1:
            \node (K) at (0,\myscale) {$G$};
            \node (G2) at (\myscale,\myscale) {$1$};
            \node (G32) at (2*\myscale,\myscale) {$1$};
            \node (G42) at (3*\myscale,\myscale) {$1$};
            \node (G41) at (4*\myscale,\myscale) {$1$};
            \node (G41) at (5*\myscale,\myscale) {$1$};
            % y=2:
            \node (L) at (0,2*\myscale) {$G'$};
            \node (G33) at (\myscale,2*\myscale) {$1$};
            \node (G43) at (2*\myscale,2*\myscale) {$1$};
            \node (G44) at (3*\myscale,2*\myscale) {$1$};
            \node (G43) at (4*\myscale,2*\myscale) {$1$};
            \node (G43) at (5*\myscale,2*\myscale) {$1$};
            % % y=3:
            \node (14) at (0,3*\myscale) {$1$};
            \node (15) at (\myscale,3*\myscale) {$1$};
            \node (16) at (2*\myscale,3*\myscale) {$1$};
            \node (17) at (3*\myscale,3*\myscale) {$1$};
            \node (18) at (4*\myscale,3*\myscale) {$1$};
            \node (18) at (5*\myscale,3*\myscale) {$1$};
            % % y=4:
            \node (14) at (0,4*\myscale) {$1$};
            \node (15) at (\myscale,4*\myscale) {$1$};
            \node (16) at (2*\myscale,4*\myscale) {$1$};
            \node (17) at (3*\myscale,4*\myscale) {$1$};
            \node (18) at (4*\myscale,4*\myscale) {$1$};
            \node (18) at (5*\myscale,4*\myscale) {$1$};
         \end{tikzpicture}
         \caption{$\widetilde{\nu}$}
         \label{fig:bijection-tilde-nu}
      \end{subfigure}
      \caption{We refresh the $(M, G)$-filter $\phi$, defined in Example~\ref{ex:just-bijection}. First, we construct the function $\nu$ in Figure~\ref{fig:bijection-nu}. We circle the elements in $(M\setminus E)\cup\mathcal{S}$ that do not evaluate to the trivial group. Then we construct the $(M,G)$-filter $\widetilde{\nu}$, as seen in Figure~\ref{fig:bijection-tilde-nu}.}
      \label{fig:refresh-bijection}
   \end{figure}
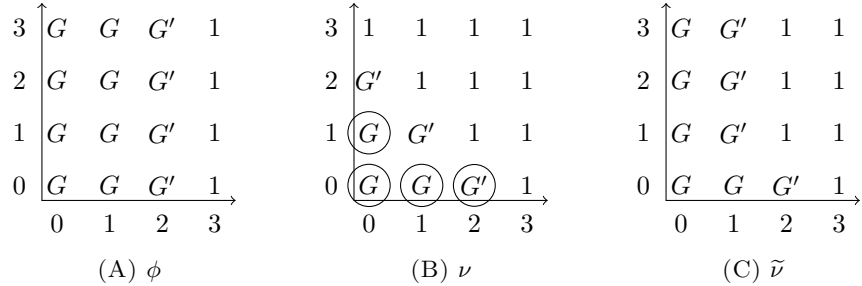
\end{ex}

\subsection{Proof of Theorem~\ref{thm:general-progressive}}

In Sections~\ref{sec:refreshing-filters} and~\ref{sec:some-examples}, we
required the monoid structure of $M$ to be nice enough so our construction would
remove inertness. If we change the monoid the filter is defined over, then we
require fewer assumptions on $M$, only that $\preceq$ be a partial order instead
of just a pre-order. We move to the free commutative monoid $\N^d$, which
eliminates sinks. Care is needed when constructing a partial order that is
compatible with the partial order on $M$, and that is the objective of the next
lemma.

\begin{lem}\label{lem:fgcp-morphism}
   If $M$ is a conically partially-ordered monoid with $\preceq$ a partial
   order, then there exists an integer $d\in\N$, a partial order $\preceq'$ for
   $\N^d$, and a surjection $\pi:\N^d\rightarrow M$ such that 
   \begin{enumerate}
      \item[$(i)$] $(\N^d, \preceq')$ is a conically partially-ordered monoid
      and 
      \item[$(ii)$] if $s\preceq' t$, then $\pi(s)\preceq \pi(t)$. 
   \end{enumerate}
\end{lem}

\begin{proof}
   Since $M$ is finitely generated, there exists $d\in\mathbb{Z}$ and a
   congruence $\sim$ of $\N^d$ such that $\N^d/\!\!\sim\;\cong M$. Let
   $\pi:\N^d\rightarrow M$ be the induced surjection. Define a partial order
   $\preceq'$ on $\N^d$ as follows. For $s,t\in \N^d$, 
   \[ 
      s\preceq' t \iff (\pi(s)\prec \pi(t)) \text{ or } (\pi(s)=\pi(t) \text{ and } s\preceq_+ t). 
   \]
   Because $\preceq$ is a partial order on $M$, $\prec$ is transitive. Since
   $\pi$ is a monoid homomorphism, $\preceq$ a partial order on $M$, and
   $\preceq_+$ a partial order of $\N^d$, it follows that $\preceq'$ is a
   partial order for $\N^d$. As $0$ is $\preceq'$-minimal in $\N^d$, $(\N^d,
   \preceq')$ is a conically partially-ordered monoid. 
\end{proof}

\begin{proof}[Proof of Theorem~\ref{thm:general-progressive}]
   If $\phi$ is a progressive $(M, G)$-filter, then apply Theorem~\ref{thm:nu-filter}. Otherwise, $\preceq$ is a partial order. By Lemma~\ref{lem:fgcp-morphism}, there exists a conically pre-ordered monoid $(\N^d, \preceq')$ and a surjection $\pi:\N^d\rightarrow M$. 
   Define a function $\rho$ from $\N^d$ into $\Nor(G)$ such that $\rho_s= \phi_{\pi(s)}$.
   For $s,t\in\N^d$, 
   \begin{align*}
      [\rho_s, \rho_t] = [\phi_{\pi(s)}, \phi_{\pi(t)}] \leq \phi_{\pi(s+t)} = \rho_{s+t}. 
   \end{align*}
   In addition, if $s\preceq' t$, then $\pi(s)\preceq \pi(t)$. Thus, $\rho_s = \phi_{\pi(s)} \geq \phi_{\pi(t)} = \rho_t$. 
   Therefore, $\rho$ is an $(\N^d, G)$-filter. Since $\pi$ is surjective, $\im(\rho)=\im(\phi)$. Since every element of $\N^d$ is unit-cancellative, $\rho$ is progressive. Apply Theorem~\ref{thm:nu-filter} to $\rho$.
\end{proof}

\begin{ex}
   We want to fix the last filter in Section~\ref{sec:examples} that has inert subgroups: Example~\ref{ex:not-progressive}. Recall, $G=H(\mathbb{Z})$, $M=(C_{3,1}\times C_{1,1}, \preceq_+)$, and for all $s\in M$, 
   \begin{align*}
      \phi_s &=\left\{ \begin{array}{ll}
         G & i = j = 0,\\
         \gamma_i(G) & i > j = 0,\\
         K & i\leq 2, j=1, \\
         1 & i=3, j=1.
      \end{array}\right.
   \end{align*}
   Because $(0, 1)$ is not unit-cancellative and $\phi_{(0, 1)}=K\ne \langle 1\rangle$, $\phi$ is not progressive. Since $\preceq_+$ is a partial order on $M$, we can fix this by applying Theorem~\ref{thm:general-progressive}. 

   A generating set for $M$ is $\mathcal{S}=\{(1, 0), (0, 1)\}$, so by Lemma~\ref{lem:fgcp-morphism}, there exists a surjection $\pi:\N^2\rightarrow M$ and a partial order $\preceq'$ on $\N^2$. Using this, we define an $(\N^2, G)$-filter $\rho$ such that, for $s=(i, j)\in M$, 
   \begin{align*}
      \rho_s &=\left\{ \begin{array}{ll}
         G & i = j = 0,\\
         \gamma_i(G) & i > j = 0,\\
         K & i\leq 2, j\geq 1, \\
         1 & i\geq 3, j\geq 1.
      \end{array}\right.
   \end{align*}
   Now $\rho$ is progressive, and we can construct an inertia-free $(\N^2, G)$-filter $\widetilde{\nu}$. We plot these four functions in Figure~\ref{fig:refresh-progressive}.

   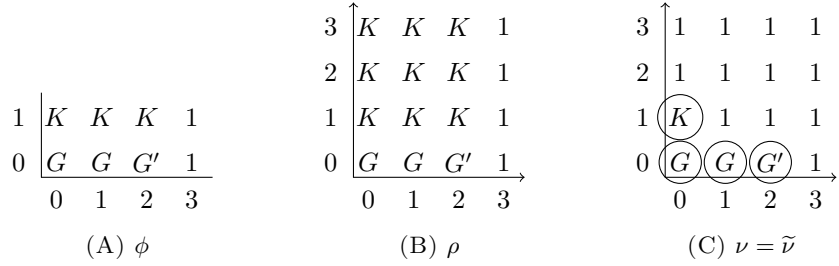
\begin{figure}[ht]
      \centering
      \begin{subfigure}[b]{0.3\textwidth}
         \centering
         \begin{tikzpicture}
            \pgfmathsetmacro{\myscale}{0.5}
            % axes
            \node (ptx) at (-0.328,-0.2) {};
            \node (pty) at (-0.2,-0.328) {};
            \node (x) at (0.4 + 3*\myscale,-0.2) {};
            \node (y) at (-0.2, 0.45 + 1*\myscale) {};
            \draw[-] (ptx) -- (x);
            \draw[-] (pty) -- (y);
         
            % labels
            \node (x0) at (0,-0.5) {0};
            \node (x1) at (\myscale,-0.5) {1};
            \node (x2) at (2*\myscale,-0.5) {2};
            \node (x3) at (3*\myscale,-0.5) {3};
            \node (y0) at (-0.5,0) {0};
            \node (y1) at (-0.5,\myscale) {1};
            
            % subgroups
            % y=0:
            \node (G) at (0,0) {$G$};
            \node (H) at (\myscale,0) {$G$};
            \node (G3) at (2*\myscale,0) {$G'$};
            \node (G4) at (3*\myscale,0) {$1$};
            % y=1:
            \node (K) at (0,\myscale) {$K$};
            \node (G2) at (\myscale,\myscale) {$K$};
            \node (G32) at (2*\myscale,\myscale) {$K$};
            \node (G42) at (3*\myscale,\myscale) {$1$};
         \end{tikzpicture}
         \caption{$\phi$}
         \label{fig:progressive-phi}
      \end{subfigure}\quad%
      \begin{subfigure}[b]{0.3\textwidth}
         \centering
         \begin{tikzpicture}
            \pgfmathsetmacro{\myscale}{0.5}
            % axes
            \node (ptx) at (-0.328,-0.2) {};
            \node (pty) at (-0.2,-0.328) {};
            \node (x) at (0.4 + 3*\myscale,-0.2) {};
            \node (y) at (-0.2, 0.45 + 3*\myscale) {};
            \draw[->] (ptx) -- (x);
            \draw[->] (pty) -- (y);
         
            % labels
            \node (x0) at (0,-0.5) {0};
            \node (x1) at (\myscale,-0.5) {1};
            \node (x2) at (2*\myscale,-0.5) {2};
            \node (x3) at (3*\myscale,-0.5) {3};
            \node (y0) at (-0.5,0) {0};
            \node (y1) at (-0.5,\myscale) {1};
            \node (y2) at (-0.5,2*\myscale) {2};
            \node (y3) at (-0.5,3*\myscale) {3};
            
            % subgroups
            % y=0:
            \node (G) at (0,0) {$G$};
            \node (H) at (\myscale,0) {$G$};
            \node (G3) at (2*\myscale,0) {$G'$};
            \node (G4) at (3*\myscale,0) {$1$};
            % y=1:
            \node (K) at (0,\myscale) {$K$};
            \node (G2) at (\myscale,\myscale) {$K$};
            \node (G32) at (2*\myscale,\myscale) {$K$};
            \node (G42) at (3*\myscale,\myscale) {$1$};
            % y=2:
            \node (K) at (0,2*\myscale) {$K$};
            \node (G2) at (\myscale,2*\myscale) {$K$};
            \node (G32) at (2*\myscale,2*\myscale) {$K$};
            \node (G42) at (3*\myscale,2*\myscale) {$1$};
            % y=3:
            \node (K) at (0,3*\myscale) {$K$};
            \node (G2) at (\myscale,3*\myscale) {$K$};
            \node (G32) at (2*\myscale,3*\myscale) {$K$};
            \node (G42) at (3*\myscale,3*\myscale) {$1$};
         \end{tikzpicture}
         \caption{$\rho$}
         \label{fig:progressive-rho}
      \end{subfigure}\quad%
      \begin{subfigure}[b]{0.3\textwidth}
         \centering
         \begin{tikzpicture}
            \pgfmathsetmacro{\myscale}{0.5}
            % axes
            \node (ptx) at (-0.328,-0.2) {};
            \node (pty) at (-0.2,-0.328) {};
            \node (x) at (0.4 + 3*\myscale,-0.2) {};
            \node (y) at (-0.2, 0.45 + 3*\myscale) {};
            \draw[->] (ptx) -- (x);
            \draw[->] (pty) -- (y);
         
            % labels
            \node (x0) at (0,-0.5) {0};
            \node (x1) at (\myscale,-0.5) {1};
            \node (x2) at (2*\myscale,-0.5) {2};
            \node (x3) at (3*\myscale,-0.5) {3};
            \node (y0) at (-0.5,0) {0};
            \node (y1) at (-0.5,\myscale) {1};
            \node (y2) at (-0.5,2*\myscale) {2};
            \node (y3) at (-0.5,3*\myscale) {3};
            
            % subgroups
            % y=0:
            \node[inner sep=2pt, circle, draw=black] (G) at (0,0) {$G$};
            \node[inner sep=2pt, circle, draw=black] (H) at (\myscale,0) {$G$};
            \node[inner sep=1pt, circle, draw=black] (G3) at (2*\myscale,0) {$G'$};
            \node (G4) at (3*\myscale,0) {$1$};
            % y=1:
            \node[inner sep=2pt, circle, draw=black] (K) at (0,\myscale) {$K$};
            \node (G2) at (\myscale,\myscale) {$1$};
            \node (G32) at (2*\myscale,\myscale) {$1$};
            \node (G42) at (3*\myscale,\myscale) {$1$};
            % y=2:
            \node (K) at (0,2*\myscale) {$1$};
            \node (G2) at (\myscale,2*\myscale) {$1$};
            \node (G32) at (2*\myscale,2*\myscale) {$1$};
            \node (G42) at (3*\myscale,2*\myscale) {$1$};
            % y=3:
            \node (K) at (0,3*\myscale) {$1$};
            \node (G2) at (\myscale,3*\myscale) {$1$};
            \node (G32) at (2*\myscale,3*\myscale) {$1$};
            \node (G42) at (3*\myscale,3*\myscale) {$1$};
         \end{tikzpicture}
         \caption{$\nu=\widetilde{\nu}$}
         \label{fig:progressive-nu}
      \end{subfigure}
      \caption{We refresh the $(M, G)$-filter $\phi$, defined in Example~\ref{ex:not-progressive}. First, we construct the progressive $(\N^2, G)$-filter $\rho$ in Figure~\ref{fig:progressive-rho}. Then we construct the function $\nu$ from $\rho$ in Figure~\ref{fig:progressive-nu}. We circle the elements in $(M\setminus E)\cup\mathcal{S}$ that do not evaluate to the trivial group. It turns out that, in this example, $\widetilde{\nu}=\nu$, so we do not plot $\widetilde{\nu}$ separately.}
      \label{fig:refresh-progressive}
   \end{figure}
\end{ex}

\subsection{Proof of Theorem~\ref{thm:Main}}

Theorems~\ref{thm:general-progressive} and~\ref{thm:surjection} imply
Theorem~\ref{thm:Main}, so we finish off the proof of
Theorem~\ref{thm:surjection} now. Assume that $\phi$ is a finite, inertia-free
$(M, G)$-filter.

\begin{definition}
A sequence $\mathcal{Y}$ of elements of an abelian group $\bigoplus_{s\in M}
L_s$ is a \emph{graded pcgs} if 
\begin{enumerate}
   \item[$(i)$] for all $y\in \mathcal{Y}$, there exists $s\in M$ such that $y\in L_s$, and
   \item[$(ii)$] for all $s\in M$, the subsequence $\mathcal{Y}\cap L_s$ is a pcgs for $L_s$. 
\end{enumerate}
\end{definition}

Throughout the remainder of this section we denote the graded pcgs for the abelian group $L(\phi)$ by $\mathcal{Y}$. 
Because $\phi$ is inertia-free, we use the $\mathfrak{B}$-sequence as a method of Noetherian induction to prove the next lemma. 

\begin{lem}\label{lem:weakly-pcgs}
   Suppose $\phi$ is a finite, inertia-free $(M,G)$-filter, and $\mathcal{Y}$ is
   a graded pcgs for $L(\phi)$. If $\mathcal{X}$ is a pre-image of $\mathcal{Y}$
   in $G$, then for all $s\in M$, $\langle\phi_s\cap\mathcal{X}\rangle=\phi_s$
   and $\mathcal{X}$ contains a pcgs for $G$. 
\end{lem} 

\begin{proof}
   Since $\phi$ is finite, $\langle 1\rangle \in\im(\phi)$, so
   $\mathfrak{B}_0=\{\langle 1\rangle \}$. For all $\phi_s\in\mathfrak{B}_0$, it
   follows that $\langle \phi_s\cap\mathcal{X}\rangle = \phi_s$ and
   $\mathcal{X}$ contains a pcgs for $\phi_s$. We assume this holds for all
   subgroups in $\mathfrak{B}_n$, for $n\geq 0$.

   Suppose $\phi_s\in \mathfrak{B}_{n+1}$ for $n\geq 0$. By definition, there
   exists $B\subseteq \mathfrak{B}_n$ such that $\partial\phi_s = \langle
   B\rangle$. By induction, for all $\phi_t\in B$, $\langle \phi_t\cap
   \mathcal{X}\rangle = \phi_t$ and $\mathcal{X}$ contains a subsequence that is
   a pcgs for $\phi_t$. Since $\mathcal{Y}$ is a graded pcgs for $L(\phi)$, the
   subsequence $\mathcal{Y}_s= L_s(\phi)\cap \mathcal{Y}$ is a pcgs for
   $L_s(\phi)$. Let $\mathcal{X}_s$ be the subsequence of $\mathcal{X}$
   corresponding to $\mathcal{Y}_s$ in $G$. Therefore by induction, 
   \[ 
      \langle \phi_s\cap \mathcal{X}\rangle 
      = \langle \mathcal{X}_s \cup (\partial\phi_s\cap \mathcal{X})\rangle 
      = \phi_s. 
   \]
   Since $\phi$ is finite and inertia-free, for every $s\in M$, there exists
   $n\geq 0$ such that $\phi_s\in\mathfrak{B}_n$. Since $G=\langle \phi_s\mid
   s\ne 0\rangle$, then $\mathcal{X}$ contains a pcgs for $G$. 
\end{proof}

\begin{proof}[Proof of Theorem~\ref{thm:surjection}]
   Let $\mathcal{Y}=(y_1,\dots, y_r)$ be a graded pcgs for $L(\phi)$ such that,
   for $L^{(i)} = \langle y_i,\dots, y_r\rangle$, the quotients $L^{(i)} /
   L^{(i+1)}$ are isomorphic to either $\mathbb{Z}$ or $\mathbb{Z}/n$ for
   $n\in\N$. It follows that $L^{(i)} = \langle y_iL^{(i+1)}\rangle$, so let
   $o(y_i)$ be either $\infty$ or $n$, depending on the order of the $L^{(i)} /
   L^{(i+1)}$. For each $a\in L(\phi)$ and $i\in\{1,\dots, r\}$, there exists
   integers $0 \leq k_i(a) < o(y_i)$ such that 
   \begin{equation} \label{eqn:collected-word}
      a = k_1(a) \cdot y_1 + \cdots + k_r(a) \cdot y_r.
   \end{equation}
   We call the expression for $a$ in~\eqref{eqn:collected-word} its normal word
   with respect to $\mathcal{Y}$, which is uniquely determined by $\mathcal{Y}$.
   
   Let $\mathcal{X}=(x_1,\dots, x_r)$ be a sequence of pre-images of
   $\mathcal{Y}$ in $G$ such that $y_i\in L_s(\phi)$ implies that
   $y_ix_i^{-1}\in \partial\phi_s$.  
   Define a function of sets $\pi_{\mathcal{Y}} : L(\phi)\rightarrow G$ such
   that 
   \begin{equation}\label{eqn:pi} 
      a \mapsto x_1^{k_1(a)} \cdots x_r^{k_r(a)}.
   \end{equation}
   Because normal words are unique, $\pi_{\mathcal{Y}}$ is well-defined. By
   Lemma~\ref{lem:weakly-pcgs}, $\mathcal{X}$ contains a pcgs for $G$, so $\pi$
   is surjective.
\end{proof}

\section{Filtered generating sets and lattices}\label{sec:partially-ordered}

Our main objective in this section is to develop a generating set that interacts
nicely with filters. A common theme for using groups effectively in
computational settings is to have a structured generating set for the group.
Some examples include bases and strong generating sets for permutation groups
\cite{Seress:book}*{Chapter~4}, (special) polycyclic generating sequences for
solvable groups \cites{CELG:special, EW:exhibit} and
\cite{Sims:book}*{Chapter~9}, and power-commutator presentations for $p$-groups
\cites{HN:p-quotient,NO:p-quotient2}. These generating sets are all based on a
series in the group. Influenced by these generating sets, we define an
appropriate generating set in the context of filters.

Throughout this section, we fix an $(M, G)$-filter $\phi$. For now, we say a
generating set $\mathcal{X}\subseteq G$ is \emph{filtered} by $\phi$ if
$\mathcal{X}$ contains a generating set for each $\phi_s$ and is compatible with
the induced complete lattice, $\Lat(\phi)$, of $\im(\phi)$, see
Definition~\ref{def:filtered} below. In this section we prove the following
theorem.

\begin{thm}\label{thm:filtered-implications}
   For a group $G$ and a conically pre-ordered monoid $M$, let $\phi$ be an $(M,
   G)$-filter. If the generating set $\mathcal{X}\subseteq G$ is filtered by
   $\phi$, then 
   \begin{enumerate}
      \item[(i)] the complete lattice induced by $\im(\phi)$ is distributive,
      and
      \item[(ii)] $\mathcal{X}$ is filtered by the boundary filter
      $\partial\phi$.
   \end{enumerate}
\end{thm}

Since the lattice of normal subgroups is not in general distributive, Theorem~\ref{thm:filtered-implications} shows that not all filters have such a generating set. We begin with a natural condition on generating sets, akin to strong generating sets from the context of permutation groups.

\begin{definition}\label{def:weakly-filtered}
A set $\mathcal{X}\subseteq G$ is \emph{weakly-filtered} by $\phi$ if for all $s\in M$, $\langle \phi_s\cap \mathcal{X}\rangle= \phi_s$. 
\end{definition}

The property of a generating set $\mathcal{X}$ being weakly-filtered can be
rephrased in the context of partially-ordered sets. Suppose
$\mathcal{X}\subseteq G$ is weakly-filtered by $\phi$. Define functions on
partially-ordered sets $\Nor(G)$ and $2^{\mathcal{X}}$; namely,
$\mathcal{C}:\Nor(G)\rightarrow 2^{\mathcal{X}}$ where $H\mapsto H\cap
\mathcal{X}$ and $\langle\cdot\rangle : 2^{\mathcal{X}} \rightarrow \Nor(G)$
where $Y\mapsto \langle Y\rangle$. These functions are order-preserving because
$H,K \in \Nor(G)$ with $H\leq K$ implies $H\cap \mathcal{X} \subseteq K\cap
\mathcal{X}$, and if $Y,Z\in 2^{\mathcal{X}}$ with $Y\subseteq Z$, then $\langle
Y\rangle\leq \langle Z\rangle$. This proves the following lemma.

\begin{lem}
   If $\mathcal{X}\subseteq G$ is weakly-filtered by $\phi$, then the
   restriction of $\mathcal{C}$ on $\im(\phi)$ is an (order) isomorphism with
   inverse $\langle\cdot\rangle_{\mathcal{X}}:\im(\phi)\cap
   \mathcal{X}\rightarrow \im(\phi)$. 
\end{lem}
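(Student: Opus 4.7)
The plan is to verify that $\cap\mathcal{X}$ restricted to $\im(\phi)$ and $\langle\cdot\rangle$ restricted to $\im(\phi)\cap\mathcal{X}$ are mutually inverse; since the paragraph preceding the lemma already records that both maps are isotone, and an order isomorphism is an isotone bijection whose inverse is isotone, that will suffice.

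First I would dispatch the composition $\langle\cdot\rangle\circ(\cap\mathcal{X})$ straight from the definition of weakly-filtered: for any $\phi_s\in\im(\phi)$, the hypothesis gives $\langle\phi_s\cap\mathcal{X}\rangle=\phi_s$, so this composition is the identity on $\im(\phi)$. In particular $\cap\mathcal{X}$ is injective on $\im(\phi)$, and hence a bijection onto its image $\im(\phi)\cap\mathcal{X}$. For the reverse composition, any element of $\im(\phi)\cap\mathcal{X}$ has the form $Y=\phi_s\cap\mathcal{X}$ for some $s\in M$, and then $\langle Y\rangle\cap\mathcal{X}=\phi_s\cap\mathcal{X}=Y$, where the first equality again invokes the weakly-filtered property. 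Together with isotonicity, this finishes the order isomorphism claim.

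There is no genuine obstacle here: the notion of weakly-filtered was introduced precisely so that $\phi_s$ can be recovered from $\phi_s\cap\mathcal{X}$, which is exactly what the two compositions require. The only subtlety worth flagging is notational, namely that $\im(\phi)\cap\mathcal{X}$ should be read as $\{\phi_s\cap\mathcal{X}:s\in M\}$, the image of $\cap\mathcal{X}$ on $\im(\phi)$, rather than any set-theoretic intersection of two subsets of a common ambient set.
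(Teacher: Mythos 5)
Your proposal is correct and follows essentially the same route as the paper, which proves the lemma simply by observing in the preceding paragraph that both maps are isotone and then invoking the weakly-filtered identity $\langle\phi_s\cap\mathcal{X}\rangle=\phi_s$ to conclude that $\langle\cdot\rangle$ inverts $\cap\mathcal{X}$. You merely spell out both compositions explicitly, which is a harmless elaboration of the same argument.
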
 

We need a stronger definition for our purposes. The set $\im(\phi)$ is, in
general, not a lattice. Let $\Lat(\phi)\cap \mathcal{X}$ denote the image of
$\Lat(\phi)$ in $2^{\mathcal{X}}$ under $\mathcal{C}$. The surjection
$\mathcal{C}: \Lat(\phi)\rightarrow \Lat(\phi)\cap \mathcal{X}$ is
order-preserving; however, if $H,K\in\Lat(\phi)$ and $H\cap \mathcal{X}\subseteq
K\cap \mathcal{X}$, then $H$ need not be a subgroup of $K$. Even as
partially-ordered sets $\Lat(\phi)$ need not be isomorphic to $\Lat(\phi)\cap
\mathcal{X}$. The strength of the following definition comes when $\mathcal{C}$
and $\langle \cdot\rangle_{\mathcal{X}}$ are complete \emph{lattice}
homomorphisms. 

\begin{definition}\label{def:filtered}
A set $\mathcal{X}\subseteq G$ is \emph{filtered} by $\phi$ if it is weakly-filtered and for all $S\subseteq M$, 
\[\bigcap_{s\in S}\phi_s = \left\langle \bigcap_{s\in S}(\phi_s\cap \mathcal{X})\right\rangle \qquad \text{and} \qquad \left(\prod_{s\in S}\phi_s\right)\cap \mathcal{X} = \bigcup_{s\in S}(\phi_s\cap \mathcal{X}). \]
\end{definition}

This definition is not just a weakly-filtered analogue for the complete lattice $\Lat(\phi)$. 
If $\mathcal{X}$ satisfies the property that for all $H\in\Lat(\phi)$, $\langle H\cap \mathcal{X}\rangle = H$, then $\mathcal{X}$ may still not be filtered by $\phi$. 

\begin{ex}\label{ex:lat-not-weak}
   We consider some subtleties of Definition~\ref{def:filtered}.
Let $G=\mathbb{Z}/60$, and $M=(\N^2,\preceq_+)$.
Define an $(M,G)$-filter $\phi$ where $\phi_0=G$, 
\[ \phi_s = \left\{ \begin{array}{ll} \langle 2 \rangle & \text{if } s=e_1, \\ \langle 3\rangle & \text{if } s=e_2, \\ \langle 10\rangle & \text{if } s=2e_1, \\ \langle 15\rangle & \text{if } s=2e_2, \end{array}\right. \]
and $\phi_s=0$ otherwise. 
Set $\mathcal{X}=\{ 2,3,10,15 \}$, and observe that $\mathcal{X}$ is weakly-filtered by $\phi$. 
In Figure~\ref{fig:Hasse-lattice}, we plot the Hasse diagram of $\im(\phi)$ and $\Lat(\phi)$. 
Set $H=\langle 6\rangle$ and $K=\langle 30\rangle$. Then $H\cap \mathcal{X} = \varnothing = K\cap \mathcal{X}$, but $\langle 6\rangle =H\not\leq K=\langle 30\rangle$.
Thus, $\mathcal{C}:\Lat(\phi)\rightarrow\Lat(\phi)\cap \mathcal{X}$ is not order-preserving and, hence, is not an isomorphism.
By Proposition~\ref{prop:distributive}, $\mathcal{X}$ is not filtered by $\phi$. 

If, instead, we set $\mathcal{X}=\{ 2,3,5,6,10,15,30\}$, then for all $H\in\Lat(\phi)$, $\langle H\cap \mathcal{X}\rangle=H$. 
However, $\mathcal{X}$ is not filtered by $\phi$: for example,
\[ (\phi_{2e_1}\phi_{2e_2})\cap \mathcal{X} = \langle 5\rangle \cap \mathcal{X} = \{ 5, 10, 15, 30\}\] 
and 
\[ (\phi_{2e_1}\cap \mathcal{X})\cup (\phi_{2e_2}\cap \mathcal{X}) = (\langle 10\rangle \cap \mathcal{X}) \cup (\langle 15\rangle \cap \mathcal{X}) = \{ 10, 15, 30\}.\]
If, for example, $\mathcal{X}=\{ 6,10,15,30\}$, then $\mathcal{X}$ is filtered by $\phi$. It is worth pointing out that $\mathcal{X}$ \emph{is} filtered by $\phi$, and it induces a pcgs for the associated Lie ring $L(\phi)$. \qed
\end{ex}

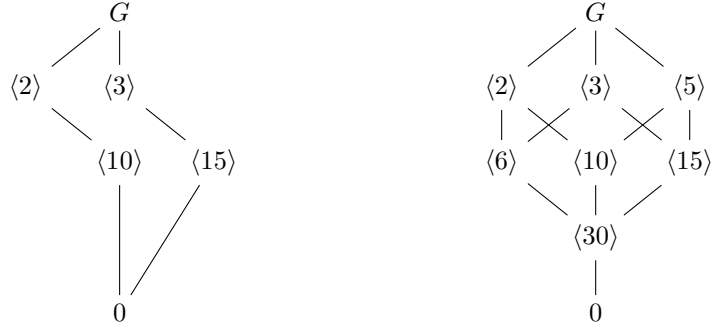
\begin{figure}[ht]
  \centering
  \begin{subfigure}[b]{0.5\textwidth}
    \centering
    \begin{tikzpicture}
      \node(G) at (0,4) {$G$};
      \node(2) at (-1.25,3) {$\langle 2\rangle$};
      \node(3) at (0,3) {$\langle 3\rangle$};
      \node(10) at (0,2) {$\langle 10\rangle$};
      \node(15) at (1.25,2) {$\langle 15\rangle$};
      \node(0) at (0,0) {$0$};
      
      \draw[-] (G) -- (2);
      \draw[-] (G) -- (3);
      \draw[-] (3) -- (15);
      \draw[-] (2) -- (10);
      \draw[-] (10) -- (0);
      \draw[-] (15) -- (0);
    \end{tikzpicture}
    \caption{The Hasse diagram of $\im(\phi)$.}
  \end{subfigure}%
  \begin{subfigure}[b]{0.5\textwidth}
    \centering
    \begin{tikzpicture}
      \node(G) at (0,4) {$G$};
      \node(2) at (-1.25,3) {$\langle 2\rangle$};
      \node(3) at (0,3) {$\langle 3\rangle$};
      \node(5) at (1.25,3) {$\langle 5\rangle$};
      \node(6) at (-1.25,2) {$\langle 6\rangle$};
      \node(10) at (0,2) {$\langle 10\rangle$};
      \node(15) at (1.25,2) {$\langle 15\rangle$};
      \node(30) at (0,1) {$\langle 30\rangle$};
      \node(0) at (0,0) {$0$};
      
      \draw[-] (G) -- (2);
      \draw[-] (G) -- (3);
      \draw[-] (G) -- (5);
      \draw[-] (5) -- (10);
      \draw[-] (5) -- (15);
      \draw[-] (2) -- (6);
      \draw[-] (2) -- (10);
      \draw[-] (3) -- (15);
      \draw[-] (3) -- (6);
      \draw[-] (6) -- (30);
      \draw[-] (10) -- (30);
      \draw[-] (15) -- (30);
      \draw[-] (30) -- (0);
    \end{tikzpicture}
    \caption{The lattice $\Lat(\phi)$.}
  \end{subfigure}
  \caption{Hasse diagrams related to $(M, G)$-filter $\phi$ from Example~\ref{ex:lat-not-weak}.}
  \label{fig:Hasse-lattice}
\end{figure}

If $\mathcal{X}$ is filtered by $\phi$, then $\mathcal{C}$ is an isomorphism,
and therefore, the lattice $\Lat(\phi)$ inherits properties of the subset
lattice $\Lat(\phi)\cap \mathcal{X}$. The next proposition proves
Theorem~\ref{thm:filtered-implications} (i). 

\begin{prop}\label{prop:distributive}
   Let $\phi$ be an $(M, G)$-filter. The set $\mathcal{X}\subseteq G$ is
   filtered by $\phi$ if, and only if, $\mathcal{C}:\Lat(\phi)\rightarrow
   \Lat(\phi)\cap \mathcal{X}$ and $\langle\cdot\rangle_{\mathcal{X}} :
   \Lat(\phi)\cap \mathcal{X}\rightarrow \Lat(\phi)$ are complete lattice
   isomorphisms. In such a case, $\Lat(\phi)$ is a distributive lattice. 
\end{prop}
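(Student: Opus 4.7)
The plan is to prove the two implications separately and then extract distributivity from the isomorphism. Write $\psi = \cap\mathcal{X}$ and $\iota = \langle\cdot\rangle$. Two facts are automatic: $\psi$ preserves arbitrary set intersections, since intersection with a fixed set commutes with any intersection; and $\iota$ always sends unions in $2^{\mathcal{X}}$ to subgroup joins, $\iota(\bigcup_i Y_i) = \prod_i \iota(Y_i)$. What the filtered hypothesis contributes on $\im(\phi)$ is exactly the complementary identities: $\iota\psi(\phi_s)=\phi_s$ from weakly-filtered, $\iota(\bigcap_S \psi(\phi_s))=\bigcap_S \phi_s$ from the meet identity, and $\psi(\prod_S \phi_s)=\bigcup_S \psi(\phi_s)$ from the join identity.

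For the forward direction, the nontrivial task is to upgrade these identities from $\im(\phi)$ to all of $\Lat(\phi)$ and to check that $\psi$ and $\iota$ preserve arbitrary meets and joins between $\Lat(\phi)$ and its image. My approach is to work inside the distributive lattice $2^{\mathcal{X}}$. Let $\mathcal{B} \subseteq 2^{\mathcal{X}}$ denote the complete sublattice generated by $\psi(\im(\phi))$. Since $2^{\mathcal{X}}$ is distributive, each $Y \in \mathcal{B}$ admits a two-level canonical form $Y = \bigcup_i \bigcap_j (\phi_{s_{ij}} \cap \mathcal{X})$. Applying $\iota$ to such a form collapses, by the automatic join-preservation of $\iota$ and the filtered meet identity on each inner block, to $\prod_i \bigcap_j \phi_{s_{ij}} \in \Lat(\phi)$; dually $\psi$ carries $\Lat(\phi)$ into $\mathcal{B}$ using the filtered join identity. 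A careful bookkeeping then shows that $\iota$ and $\psi$ are mutually inverse bijections between $\Lat(\phi)$ and $\mathcal{B} = \Lat(\phi)\cap\mathcal{X}$ that preserve all meets and joins, which is the forward implication.

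For the backward direction, assume $\psi$ and $\iota$ are (mutually inverse) complete lattice isomorphisms. Then weakly-filtered is just $\iota\psi(\phi_s)=\phi_s$, and the meet identity is the two-line calculation $\bigcap_S \phi_s = \bigcap_S \iota\psi(\phi_s) = \iota(\bigcap_S \psi(\phi_s))$, using that $\iota$ preserves meets; the join identity is dual, using $\psi\iota=\mathrm{id}$ and that $\psi$ preserves joins. Distributivity of $\Lat(\phi)$ then follows immediately: $\Lat(\phi) \cap \mathcal{X}$ is a complete sublattice of the Boolean lattice $2^{\mathcal{X}}$, hence distributive, and the isomorphism transports distributivity back to $\Lat(\phi)$. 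The main obstacle is the bootstrapping in the forward direction: the filtered definition is stated only for indexed families from $\im(\phi)$, so pushing the identities through to arbitrary nested lattice expressions in $\Lat(\phi)$ requires using distributivity of $2^{\mathcal{X}}$ as the structural tool that lets us reduce such expressions to the two-level canonical forms where the filtered identities apply directly.
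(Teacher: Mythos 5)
Your proposal is correct and its skeleton matches the paper's: verify the two filtered identities, observe that $\cap\mathcal{X}$ and $\langle\cdot\rangle$ are isotone and mutually inverse, and transport distributivity from the Boolean lattice $2^{\mathcal{X}}$. Your backward direction is the paper's verbatim. The difference is in the forward direction: the paper simply checks the two identities for families $\{\phi_s\}_{s\in S}$ indexed by $S\subseteq M$ and immediately declares both maps to be complete lattice homomorphisms on all of $\Lat(\phi)$, whereas you explicitly flag the bootstrapping problem---the filtered definition speaks only about $\im(\phi)$, while $\Lat(\phi)$ contains arbitrary nested meets and joins---and propose to resolve it by reducing every element of the complete sublattice of $2^{\mathcal{X}}$ generated by $\{\phi_s\cap\mathcal{X}\}$ to a two-level join-of-meets canonical form, which is legitimate because $2^{\mathcal{X}}$ is completely distributive. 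This buys a genuinely cleaner account of where the hypothesis is used and why $\Lat(\phi)\cap\mathcal{X}$ is a sublattice of $2^{\mathcal{X}}$ at all. One caution: the ``careful bookkeeping'' you defer still conceals the only step with real content, namely that $\langle\cdot\rangle$ carries meets of two-level forms to the corresponding subgroup intersections---equivalently, that an intersection such as $\phi_s\phi_t\cap\phi_u\phi_v$ is generated by its trace on $\mathcal{X}$, and that $\bigl(\prod_i C_i\bigr)\cap\mathcal{X}=\bigcup_i(C_i\cap\mathcal{X})$ when the $C_i$ are themselves intersections of terms of $\phi$. The latter does follow from the join identity by a choice-function argument (if $x$ lay in no $C_i\cap\mathcal{X}$, pick for each $i$ an index witnessing $x\notin\phi_{s_{ij(i)}}$ and apply the identity to that selection), but it deserves to be written out; the paper's own proof elides exactly the same point, so you are not worse off, merely not yet done.
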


\begin{proof}
Suppose $\mathcal{X}$ is filtered by $\phi$ and $S\subseteq M$.
Since $\cap$ is associative and $\mathcal{X}$ is filtered by $\phi$,
\begin{align*} 
   \left(\bigcap_{s\in S}\phi_s\right)\cap \mathcal{X} &= \bigcap_{s\in S}(\phi_s\cap \mathcal{X}), & \left(\prod_{s\in S}\phi_s\right)\cap \mathcal{X} &= \bigcup_{s\in S}(\phi_s\cap \mathcal{X}).
\end{align*}
Hence $\mathcal{C}:\Lat(\phi)\rightarrow \Lat(\phi)\cap \mathcal{X}$ is a lattice homomorphism.
Since $\mathcal{X}$ is filtered by $\phi$ it is also weakly-filtered. 
Therefore,  
\begin{align*}
   \left\langle \bigcup_{s\in S}(\phi_s\cap \mathcal{X}) \right\rangle &= \prod_{s\in S}\langle \phi_s\cap \mathcal{X}\rangle = \prod_{s\in S}\phi_s, & \left\langle \bigcap_{s\in S}(\phi_s\cap \mathcal{X})\right\rangle &= \bigcap_{s\in S}\phi_s .
\end{align*}
Therefore, $\langle\cdot\rangle_{\mathcal{X}}:\Lat(\phi)\cap \mathcal{X}\rightarrow \Lat(\phi)$ is a lattice homomorphism.
Both homomorphisms $\mathcal{C}$ and $\langle\cdot\rangle_{\mathcal{X}}$ are order-preserving.
Since $\mathcal{X}$ is weakly-filtered, $\langle\cdot\rangle_{\mathcal{X}}$ is the inverse of $\mathcal{C}$, and hence, $\Lat(\phi)\cong \Lat(\phi)\cap \mathcal{X}$.

Conversely, suppose $\mathcal{C}$ and $\langle\cdot\rangle_{\mathcal{X}}$ are complete lattice isomorphisms.
It follows then that $\mathcal{X}$ is weakly-filtered by $\phi$. 
Let $S\subseteq M$, so $\bigcap_{s\in S}\phi_s\in\Lat(\phi)$.
Since $\langle \cdot\rangle_{\mathcal{X}}$ is a complete lattice homomorphism, 
\[ \bigcap_{s\in S} \phi_s = \left\langle \left(\bigcap_{s\in S}\phi_s\right)\cap \mathcal{X}\right\rangle = \left\langle \bigcap_{s\in S}(\phi_s\cap \mathcal{X})\right\rangle. \]
Furthermore, since $\mathcal{C}$ is a complete lattice homomorphism, 
\[ \left(\prod_{s\in S}\phi_s\right)\cap \mathcal{X} = \bigcup_{s\in S}(\phi_s\cap \mathcal{X}).\qedhere \]
\end{proof}

Now we can prove the second part of Theorem~\ref{thm:filtered-implications} by employing Proposition~\ref{prop:distributive}.
The key to the next proof is to use the fact that $\cap\mathcal{X}$ and $\langle\cdot\rangle_{\mathcal{X}}$ are complete lattice homomorphisms when $\mathcal{X}$ is filtered by $\phi$. 

\begin{proof}[Proof of Theorem~\ref{thm:filtered-implications} (ii)]
Suppose $\mathcal{X}$ is filtered by $\phi$; we will prove that $\mathcal{X}$ is also filtered by $\partial\phi$. 
First we show that for all $S\subseteq M$,
\[ \bigcap_{s\in S} \partial\phi_s = \left\langle \bigcap_{s\in S} (\partial\phi_s\cap\mathcal{X})\right\rangle. \]
By Proposition~\ref{prop:distributive}, $\mathcal{C}$ and $\langle\cdot\rangle$ are complete lattice homomorphisms, so 
\begin{align*}
\left\langle \bigcap_{s\in S}(\partial\phi_s\cap\mathcal{X})\right\rangle = \left\langle \bigcap_{s\in S}\bigcup_{t\in M\setminus\{0\}} (\phi_{s+t}\cap \mathcal{X})\right\rangle = \bigcap_{s\in S} \prod_{t\in M\setminus\{0\}} \langle\phi_{s+t}\cap\mathcal{X}\rangle = \bigcap_{s\in S}\partial\phi_s. 
\end{align*}
For the second part, we show that 
\[ \left(\prod_{s\in S} \partial\phi_s \right) \cap \mathcal{X} = \bigcup_{s\in S} (\partial\phi_s\cap \mathcal{X}). \]
Again, we use the fact that $\cap\mathcal{X}$ is a complete lattice homomorphism:
\[ \left(\prod_{s\in S} \partial\phi_s \right) \cap \mathcal{X} = \left( \prod_{s\in S}\prod_{t\in M\setminus\{0\}} \phi_{s+t} \right) \cap \mathcal{X} = \bigcup_{s\in S} \left( \partial\phi_s \cap\mathcal{X}\right). \]
Therefore, $\mathcal{X}$ is filtered by $\partial\phi$. 
\end{proof}

\section{Faithful filters}\label{sec:faithful-filters}

In this section, we impose one more property on our filters so that the sets
$L(\phi)$ and $G$ are in bijection. The main reason that the surjection
$\pi:L(\phi)\rightarrow G$ from Theorem~\ref{thm:surjection}, cf.\
equation~(\ref{eqn:pi}), might not be injective comes down to the fact that
$(\phi_s\setminus\partial\phi_s)\cap (\phi_t\setminus\partial\phi_t)$ might be
nonempty for distinct $s,t\in M$. Recall the definition of a faithful filter
from Definition~\ref{def:faithful}. Observe that the first property of faithful
filters implies that $\mathcal{X}$ is weakly-filtered by $\phi$, and the second
property implies that $\mathcal{X}$ is filtered by $\phi$. If $\phi$ is a
faithful $(M, G)$-filter such that $\mathcal{X}\subseteq G$ satisfies the three
properties of Definition~\ref{def:faithful}, then we say that $\mathcal{X}$ is
\emph{faithfully filtered} by $\phi$. 

We prove the following theorems in this section.

\begin{thm}\label{thm:faithful-basis}
   Assume $\phi$ is a finite, inertia-free $(M, G)$-filter.
   If $\phi$ is faithful, then every pre-image of every graded pcgs for $L(\phi)$ is filtered by $\phi$. 
\end{thm}

\begin{thm}[Theorem~\ref{thm:Main2}]\label{thm:bijection}  
   If $\phi$ is a finite, faithful, and inertia-free $(M, G)$-filter, then there exists a bijection between $L(\phi)$ and $G$ that maps a pcgs of $L(\phi)$, as an abelian group, to a pcgs of $G$.
\end{thm}

The argument in the next lemma is fundamental to the proofs for the above theorems, and it illustrates a proof by descent for finite inertia-free filters. The essence of the argument is that if $x\in\phi_s\cap \phi_t$, then $x$ is contained in either $\partial\phi_s$ or $\partial\phi_t$ because $\phi$ is faithful. Because $\phi$ is finite and inertia-free, we apply Proposition~\ref{prop:inert} to both $\partial\phi_s$ and $\partial\phi_t$. The element $x$ must be contained in one of these smaller subgroups, say $x\in\phi_u\cap \phi_t$. So we continue descending until we reach a contradiction. 

\begin{lem}\label{lem:boundary-contain}
   Suppose $\phi$ is a finite, faithful, and inertia-free $(M, G)$-filter. If
   $s, t\in M$ such that $\phi_s < \phi_t$, then $\phi_s\leq \partial \phi_t$. 
\end{lem}

\begin{proof}
   If $\partial\phi_t=\phi_t$, then we are done. Otherwise, $L_t(\phi)\ne 0$.
   Since $\phi$ is faithful, there exists $\mathcal{X}\subseteq G$ such that
   $\mathcal{X}$ is faithfully filtered by $\phi$. Without loss of generality,
   we assume that $1\notin\mathcal{X}$. Let $\mathcal{X}_t =
   (\phi_t\setminus\partial\phi_t) \cap \mathcal{X}$. Since $\mathcal{X}$ is
   filtered by $\phi$, by Theorem~\ref{thm:filtered-implications} $\langle
   \mathcal{X}_t\rangle \partial\phi_t = \phi_t$. If $\phi_s\cap
   \mathcal{X}_t=\emptyset$, then by Theorem~\ref{thm:filtered-implications} and
   Proposition~\ref{prop:distributive}, 
   \begin{align*}
      \phi_s = \langle \phi_s\cap\phi_t\cap \mathcal{X} \rangle \leq \langle \phi_s \cap \mathcal{X}_t \rangle \partial\phi_t = \partial\phi_t.
   \end{align*}
   Otherwise, $\phi_s\cap\mathcal{X}_t\ne \emptyset$. Since $\mathcal{X}$ is
   faithfully filtered by $\phi$, 
   \[ 
      \phi_s\cap\mathcal{X}_t = \partial\phi_s\cap\mathcal{X}_t\ne \emptyset. 
   \]

   Let $x\in\partial\phi_s\cap\mathcal{X}_t$. Since $\phi$ is finite and
   inertia-free, there exists $I_s\subseteq \mathcal{I}_\phi$ such that
   $\partial\phi_s = \langle \phi_u\mid u\in I_s\rangle$ by
   Proposition~\ref{prop:inert}. Since $\mathcal{X}$ is filtered by $\phi$, 
   \[ 
      \partial\phi_s\cap\mathcal{X}=\bigcup_{u\in I_s} (\phi_u\cap\mathcal{X}). 
   \]
   For each $u\in I_s$, $\phi_u < \phi_s$, and since $x\in\mathcal{X}_t$, there
   exists $u\in I_s$ such that $x\in \phi_u$. Now we are back to where we were
   earlier but with a new subgroup: $\phi_u < \phi_t$ and $x\in\phi_u\cap
   \mathcal{X}_t$. Because $\phi$ is finite and inertia-free, $x\in\bigcap_{s\in
   M}\phi_s\cap \mathcal{X}$, so $x=1$, which is a contradiction. Therefore,
   $\phi_s\cap\mathcal{X}_t\ne\emptyset$ cannot happen, so the lemma follows. 
\end{proof}

Recall that $\parallel$ denotes that two elements of a partially-ordered set are
incomparable. 

\begin{prop}\label{prop:intersection}
   Suppose $\phi$ is a finite, faithful, and inertia-free $(M, G)$-filter, and suppose $S\subseteq M$ such that $|S|\geq 2$.
   If for every distinct pair $s,t\in S$, $\phi_s\parallel\phi_t$, then 
   \[ \bigcap_{s\in S}\phi_s = \bigcap_{s\in S}\partial\phi_s. \]
\end{prop}

\begin{proof}
   Since $\phi_s\geq \partial\phi_s$ for all $s\in M$, we need only show one containment direction. If $\bigcap_{s\in S}\phi_s = 1$, then we are done, so suppose that $\bigcap_{s\in S}\phi_s\ne 1$. Since $\phi$ is faithful, there exists $\mathcal{X}\subseteq G$ faithfully filtered by $\phi$. Thus, $\bigcap_{s\in S} \phi_s= \langle \bigcap_{s\in S} (\phi_s\cap\mathcal{X})\rangle$. Since the intersection is nontrivial, it follows there exists an $x\ne 1$ such that 
   \begin{align}\label{eqn:little-x}
      x\in \left(\bigcap_{s\in S}\phi_s\right)\cap \mathcal{X} = \bigcap_{s\in S}(\phi_s\cap\mathcal{X}).
   \end{align}
   Since $\phi$ is faithful and $x\in \mathcal{X}$, there exists a unique $t\in M$ such that $x\in\phi_t\setminus\partial\phi_t$. If $t\notin S$, then $x\in \bigcap_{s\in S}\partial\phi_s$. If this holds for all choices of $x$ that satisfy~\eqref{eqn:little-x}, then 
   \begin{align*}
      \bigcap_{s\in S}\phi_s = \left\langle \bigcap_{s\in S}(\phi_s\cap\mathcal{X}) \right\rangle \leq \bigcap_{s\in S}\partial\phi_s. 
   \end{align*}
   Therefore, the lemma follows in this case.
   
   Otherwise, assume $t\in S$. Since $\phi$ is faithful, for every $s\in S\setminus\{t\}$, $x\in\partial\phi_s \cap (\phi_t\setminus\partial\phi_t)$. Because $|S|\geq 2$, there is at least one such $s$. As $\phi$ is finite and inertia-free, it follows that there exists $I_s\subseteq \mathcal{I}_\phi$ such that $\partial\phi_s = \langle \phi_u\mid u\in I_s\rangle$ by Proposition~\ref{prop:inert}. 
   For $u\in I_s$, we consider three cases: $\phi_u\geq \phi_t$, $\phi_u< \phi_t$, and $\phi_u\parallel \phi_t$. We show that all cases lead to a contradiction. And therefore, $t\notin S$. 
   
   Since $\phi_s\parallel\phi_t$, there cannot be $u\in I_s$ such that $\phi_u\geq\phi_t$. This would imply that $\phi_s \geq \partial\phi_s \geq \phi_u \geq \phi_t$. By the same argument as used in the proof of Lemma~\ref{lem:boundary-contain} (using the fact that $\mathcal{X}$ is filtered by $\phi$), there exists $u\in I_s$ such that $x\in\phi_u$. If $\phi_u < \phi_t$, then by Lemma~\ref{lem:boundary-contain}, $\phi_u\leq \partial\phi_t$, which would be a contradiction since $x\notin\partial\phi_t$. Thus, we have shown that $\phi_u\parallel\phi_t$. Since $u\in\mathcal{I}_\phi$, $\partial\phi_u\ne\phi_u$, and we can continue this descent down the $\mathfrak{B}$-chain. Eventually, we reach a subgroup that must be contained in $\phi_t$; in particular, $\mathfrak{B}_0=\{\langle 1\rangle\}$. This is a contradiction, and so we cannot have $t\in S$. 
\end{proof}
 
From the above statements, faithful filters are highly structured filters. Indeed, if $\phi$ is a faithful filter and $\mathcal{X}$ is filtered by $\phi$, then $\mathcal{X}$ is faithfully filtered by $\phi$.

\subsection{Proof of Theorem~\ref{thm:faithful-basis}}

Now we are ready to prove that every graded pcgs of $L(\phi)$, for a finite, faithful, inertia-free filter $\phi$, induces a faithfully filtered set of $G$. 
The following proof uses Noetherian induction, going up the $\mathfrak{B}$-sequence 
\[ \{\langle 1\rangle \} = \mathfrak{B}_0 \subseteq \mathfrak{B}_1 \subseteq \cdots. \]

\begin{definition}
   For an $(M, G)$-filter $\phi$, a set $\mathcal{X}\subseteq G$ is \emph{$\mathfrak{B}_n$-filtered} if $\mathcal{X}$ is weakly-filtered by $\phi$ and for all $S\subseteq M$ such that $\{ \phi_s \mid s\in S\}\subseteq \mathfrak{B}_n$, 
   \begin{align*}
      \bigcap_{s\in S} \phi_s &= \left\langle \bigcap_{s\in S} (\phi_s \cap\mathcal{X}) \right\rangle & \left(\prod_{s\in S}\phi_s\right) \cap \mathcal{X} &= \bigcup_{s\in S}(\phi_s \cap \mathcal{X}) . 
   \end{align*}
\end{definition}

The idea is to assume that a pre-image $\mathcal{X}$ of an arbitrary graded pcgs $\mathcal{Y}$ of $L(\phi)$ is filtered by $\phi$ up to some $\mathfrak{B}_n$. 
Because $\langle 1\rangle \in\im(\phi)$, this holds for $\mathfrak{B}_0$. 
Then for every group $\phi_s\in\mathfrak{B}_{n+1}$, there exists $B\in\mathfrak{B}_n$ such that $\partial\phi_s=\langle B\rangle$. 
Thus, $\partial\phi_s$ is handled by the induction hypothesis, and all that is left are quotients $\phi_s/\partial\phi_s=L_s(\phi)$.

In the next lemma, we denote the complete lattice induced on $\mathfrak{B}_n$ by $\Lat(\mathfrak{B}_n)$. As $\mathfrak{B}_n\subseteq\im(\phi)$, it follows that $\Lat(\mathfrak{B}_n)$ is a sublattice of $\Lat(\phi)$. 

\begin{lem}\label{lem:B-filtered}
   Let $\phi$ be a finite inertia-free $(M, G)$-filter. The set $\mathcal{X}\subseteq G$ is $\mathfrak{B}_n$-filtered if, and only if, $\mathcal{X}$ is weakly-filtered by $\phi$ and $\left.\mathcal{C}\right|_{\mathfrak{B}_n} : \Lat(\mathfrak{B}_n)\rightarrow \Lat(\mathfrak{B}_n)\cap \mathcal{X}$ and $\langle\cdot\rangle_{\mathcal{X}} : \Lat(\mathfrak{B}_n)\cap\mathcal{X} \rightarrow \Lat(\mathfrak{B}_n)$. 
\end{lem}

\begin{proof}
   The proof follows from the proof of Proposition~\ref{prop:distributive}.
\end{proof}

\begin{proof}[Proof of Theorem~\ref{thm:faithful-basis}]
   Let $\mathcal{Y}$ be a graded pcgs for $L(\phi)$ whose pre-image in $G$ is
   denoted by $\mathcal{X}$. By Lemma~\ref{lem:weakly-pcgs}, $\mathcal{X}$ is
   weakly-filtered by $\phi$. It follows that $\mathcal{X}$ is
   $\mathfrak{B}_0$-filtered, so suppose $n\geq 0$ and $\mathcal{X}$ is
   $\mathfrak{B}_n$-filtered. 
   
   Let $S\subseteq M$ such that $\{\phi_s\mid s\in S\}\subseteq \mathfrak{B}_{n+1}$. Using Lemma~\ref{lem:boundary-contain}, we assume, without loss of generality, that $|S|\geq 2$ and for all distinct $s, t\in S$, $\phi_s\parallel\phi_t$. 
   First we show that
   \begin{equation}\label{eqn:induction-join} 
      \left( \prod_{s\in S} \phi_s \right)\cap \mathcal{X} = \bigcup_{s\in S}(\phi_s\cap\mathcal{X}).
   \end{equation}
   Since $\mathcal{X}$ is weakly-filtered by $\phi$, the ``$\supseteq$''-containment of~\eqref{eqn:induction-join} holds. Thus, we just prove the ``$\subseteq$''-containment. Let $H=\prod_{s\in S}\phi_s$ and $K=\prod_{s\in S}\partial\phi_s$. Then $ H\cap \mathcal{X} = ((H\setminus K)\cap \mathcal{X}) \cup (K \cap\mathcal{X})$. By induction, 
   \[K\cap \mathcal{X} = \bigcup_{s\in S}(\partial\phi_s\cap \mathcal{X})\subseteq \bigcup_{s\in S}(\phi_s\cap \mathcal{X}). \]
   If $H=K$, then we are done, so suppose $H\ne K$. Proposition~\ref{prop:intersection} implies that $H/K \cong \bigoplus_{s\in S}L_s(\phi)$. This implies that there exists $x\in (H\setminus K)\cap\mathcal{X}$ since $\mathcal{X}$ is a pre-image of the pcgs $\mathcal{Y}$ for $L(\phi)$. Let $y\in \mathcal{Y}$ be the element corresponding to $x$. Since $\mathcal{Y}$ is a graded pcgs for $L(\phi)$, there exists a unique $t\in M$ such that $y\in L_t(\phi)$. Thus, $x\in\phi_t\setminus\partial\phi_t$, so $t\in S$. Therefore, $x\in(\phi_t\cap \mathcal{X}) \subseteq \bigcup_{s\in S}(\phi_s\cap\mathcal{X})$. Therefore,~\eqref{eqn:induction-join} holds. 

   Now we show that the intersection equality holds; namely
   \begin{align}\label{eqn:induction-meet}
      \bigcap_{s\in S} \phi_s = \left\langle \bigcap_{s\in S} (\phi_s \cap\mathcal{X}) \right\rangle.
   \end{align}
   Since $\phi_s\in\mathfrak{B}_{n+1}$, there exists $B_s\subseteq\mathfrak{B}_n$ such that $\partial\phi_s = \langle B_s\rangle$. Therefore, 
   \begin{align*}
      \left\langle \bigcap_{s\in S}\phi_s\cap \mathcal{X}\right\rangle &= \left\langle \bigcap_{s\in S}\partial\phi_s\cap \mathcal{X}\right\rangle & (\text{Proposition~\ref{prop:intersection}}) \\
      &= \left\langle \bigcap_{s\in S}\left(\prod_{H\in B_s}H\right)\cap \mathcal{X}\right\rangle & \left(\begin{array}{c} B_s\in\mathfrak{B}_{n} \\ \partial\phi_s = \langle B_s\rangle \end{array}\right)\\
      &= \left\langle \bigcap_{s\in S}\bigcup_{H\in B_s} (H\cap \mathcal{X}) \right\rangle & (\text{equation~\eqref{eqn:induction-join}})\\
      &= \bigcap_{s\in S}\prod_{H\in B_s} \langle H\cap \mathcal{X}\rangle & \left(\begin{array}{c}\text{induction and} \\ \text{Lemma~\ref{lem:B-filtered}}\end{array}\right) \\
      &= \bigcap_{s\in S}\partial\phi_{s} & (\text{weakly-filtered}) \\
      & = \bigcap_{s\in S}\phi_s . & (\text{Proposition~\ref{prop:intersection}})
   \end{align*}
   Therefore, $\mathcal{X}$ is $\mathfrak{B}_{n+1}$-filtered. Since $\phi$ is finite and inertia-free, $\mathcal{X}$ is filtered by $\phi$. Since $\mathcal{X}$ is a pre-image of a graded pcgs, $\mathcal{X}$ is faithfully filtered by $\phi$. 
\end{proof}

Example~\ref{ex:infinite} illustrates one instance of a filter that is not faithful. 
This problem seems to comes up naturally, and this phenomenon arises again in an example in Section~\ref{sec:examples-section}.
It is not known if a method exists in general to address the issue in Example~\ref{ex:infinite} similarly to the way in which Theorem~\ref{thm:nu-filter} addresses Example~\ref{ex:trivial}. 

\subsection{Proof of Theorem~\ref{thm:Main2}}

The crux of Theorem~\ref{thm:Main2} is not the bijection between $G$ and $L(\phi)$---though that is necessary for our purposes---the main point is actually the induced bijection between graded pcgs of $L(\phi)$ and pcgs of $G$ filtered by $\phi$. 
This is critical to proving Theorem~\ref{thm:Main3}.
Consider Example~\ref{ex:just-bijection}. There, $L(\phi)$ and $G$ are in bijection, but the bijection does not help us lift isomorphisms of graded Lie rings to group isomorphisms. 

\begin{proof}[Proof of Theorem~\ref{thm:Main2}]
   Let $\mathcal{Y} = (y_1,\dots, y_r)$ be a minimal graded pcgs for $L(\phi)$ and $\mathcal{X}=(x_1,\dots, x_r)$ a pre-image of $\mathcal{Y}$ in $G$. 
   We use the same notation from the proof of Theorem~\ref{thm:Main}, see~\eqref{eqn:pi} for details.
   By Theorem~\ref{thm:faithful-basis}, $\mathcal{X}$ is faithfully filtered by $\phi$. 
   From the proof of Theorem~\ref{thm:surjection}, the map the $\pi_{\mathcal{Y}} : L(\phi)\rightarrow G$ is a surjection of sets.

   By Lemma~\ref{lem:weakly-pcgs}, $\mathcal{X}$ contains a pcgs of $G$.
   Suppose for some $x\in \mathcal{X}$, the set $\mathcal{X}\setminus\{x\}$ still contains a pcgs for $G$. Let $y$ denote the element corresponding to $x$ in $\mathcal{Y}$. 
   Since $\mathcal{X}$ is faithfully filtered by $\phi$, for each $i\in\{1,\dots, r\}$, there exists a unique $s_i\in M$ such that $x_i \in\phi_{s_i}\setminus\partial\phi_{s_i}$ and $y_i\in L_{s_i}(\phi)$. In particular, there exists unique $s\in M$ such that $x\in\phi_s\setminus\partial\phi_s$. 
   Let $\{x_{i_1},\dots, x_{i_n}\} = (\phi_s\cap \mathcal{X}) \setminus (\{x\} \cup \partial\phi_s)$, so there exists integers $e_{i_j}$ such that 
   \begin{align}\label{eqn:x-word}
      x^{-1}\left(x_{i_1}^{e_{i_1}} \cdots x_{i_n}^{e_{i_n}}\right) &\in\partial\phi_s.
   \end{align}
   If $y_{i_j}$ is the element of $\mathcal{Y}$ corresponding to $x_{i_j}$, then~\eqref{eqn:x-word} implies that in $L_s(\phi)$: 
   \begin{align}\label{eqn:lin-dep}
      y = e_{i_1}\cdot y_{i_1} + \cdots + e_{i_n}\cdot y_{i_n}.
   \end{align}
   However, by minimality $\mathcal{Y}\setminus\{y\}$ is not a pcgs of
   $L(\phi)$, so~\eqref{eqn:lin-dep} implies a contradiction. Hence,
   $\mathcal{X}$ is a pcgs for $G$, and therefore, every $g\in G$ is expressed
   by a unique normal word in $\mathcal{X}$. Therefore, $\pi$ is injective.
\end{proof}

\subsection{Proof of Theorem~\ref{thm:Main3}}

\begin{proof}
   Suppose $\alpha:G\rightarrow H$ is an isomorphism, and let $\phi$ be a
   faithful $(M,G)$-filter. Therefore, $\theta := \phi^\alpha$ is a faithful
   $(M, H)$-filter, and $\alpha$ induces an $M$-graded Lie isomorphism
   $\hat{\alpha}:L(\phi)\rightarrow L(\theta)$. In particular $\hat{\alpha}$
   maps a pcgs, $\mathcal{Y}$, for $L(\phi)$ to a pcgs for $L(\theta)$. By
   Theorem~\ref{thm:Main2}, every pre-image $\mathcal{X}$ of $\mathcal{Y}$ is a
   pcgs for $G$, so we fix one pre-image. Set $\mathcal{X}_s = (\phi_s \setminus
   \partial\phi_s) \cap \mathcal{X}$ for all $s\in S$. Since $\phi$ is faithful,
   we identify the disjoint union with the union:  
   \[ 
      \bigsqcup_{s\in \mathcal{I}_{\phi}} \mathcal{X}_s 
      = \bigcup_{s\in\mathcal{I}_{\phi}} \mathcal{X}_s = \mathcal{X}. 
   \] 
   Thus, there exists a transversal $\sigma_s : L_s(\theta) \rightarrow
   \theta_s$ such that the partial lift $\psi : \mathcal{X} \rightarrow H$,
   mapping $x\in\mathcal{X}_s$ to $(\partial\phi_sx)^{\beta\tau_s}$, is equal to
   the restriction of $\alpha$ on $\mathcal{X}$. Since $\mathcal{X}$ is a pcgs
   for $G$, it follows that $\psi$ induces $\alpha$. Hence, every isomorphism
   $\alpha:G\rightarrow H$ is realized as a lift of an $M$-graded Lie ring
   isomorphism. 
\end{proof}

\section{Examples}\label{sec:examples-section}

We close with two examples that demonstrate the desire for filters to break away
from the constraints of totally ordered monoids since incorporating more
complicated characteristic structure provides a more significant computational
savings. The examples also demonstrate the potential challenge of guaranteeing
the faithful property in filters. While Theorem~\ref{thm:general-progressive}
constructs inertia-free filters, we do not yet have a general construction for
faithful, inertia-free filters. 

Recall that for a group $G$, $\gamma_1=G$ and $\gamma_{k+1} = [\gamma_k, G]$ for
$k\geq 1$.

\begin{ex}\label{ex:UT4}

Let $G$ be the group of $4\times 4$ upper unitriangular matrices over
$\mathbb{Z}$. We define two characteristic subgroups of $G$:
\begin{align*} 
   H &= \begin{bmatrix} 
      1 & * & * & * \\ 
      & 1 & 0 & * \\ 
      & & 1 & * \\ 
      & & & 1 
   \end{bmatrix}, & 
   K &= \begin{bmatrix} 
      1 & 0 & * & * \\ 
      & 1 & * & * \\ 
      & & 1 & 0 \\ 
      & & & 1 
   \end{bmatrix}.
\end{align*}

Let $M=(\N^2, \preceq_+)$, and set $e_1=(1,0)$ and $e_2=(0,1)$. Since $G=HK$, we
define $\phi_{e_1}=H$ and $\phi_{e_2}=K$ in our $(M, G)$-filter $\phi$. We apply
the generating formula from~\cite{W:char}*{Theorem~3.3} to generate a filter
from this data---together with $\phi_0=G$. We plot $\phi$ in
Figure~\ref{fig:UT4}. As abelian groups, $L(\phi)\cong \mathbb{Z}^6$, and the
$(M, G)$-filter is faithful and inertia-free, with $\mathcal{X}=\{1+E_{ij} \mid
1 \leq i < j \leq 4\}$, where $E_{ij}$ is the matrix with a 1 in the $(i, j)$
entry and 0 elsewhere. 

Observe in Figure~\ref{fig:UT4} that $\phi_{(i,j)} = \langle 1\rangle$ whenever
$i\geq 2$ or $j\geq 3$. Instead of the infinite monoid $\N^2$, we can define
$\phi$ over the finite monoid $(C_{3, 1}\times C_{2, 1},\preceq_+)$. However, we
will choose a different finite monoid. Since $G$ is class 3, we define a
congruence for $\N^2$ that identifies all sums of at least $4$ nontrivial
elements as the same element in our monoid. This models the fact that
$\gamma_4(G)=1$. 

Define a congruence $\sim$ on $\N^2$ as follows
\begin{align*}
   (i, j) \sim (k, \ell) \iff \left\{ \begin{array}{l}
      (i, j) = (k, \ell), \text{ or}\\
      i+j \geq 4 \text{ and } k+\ell \geq 4. 
   \end{array}\right.
\end{align*}
Let $M'=(\N^2/\!\sim, \preceq_+)$, and let $\phi'$ be an $(M', G)$-filter as
seen in Figure~\ref{fig:UT4-2}. One can define $\phi'$ similar to $\phi$, but
$\phi'$ has the property that if $i+j=k>0$, then $\gamma_{k+1}(G) <
\phi_{(i,j)}' \leq \gamma_k(G)$. Since $\phi'$ is not faithful, it 

\begin{figure}[ht]
   \begin{subfigure}[b]{0.45\textwidth}
      \centering
      \begin{tikzpicture}
         \pgfmathsetmacro{\myscale}{0.6}
         % axes
         \node (ptx) at (-0.328,-0.2) {};
         \node (pty) at (-0.2,-0.328) {};
         \node (x) at (0.4 + 3*\myscale,-0.2) {};
         \node (y) at (-0.2, 0.45 + 2*\myscale) {};
         \draw[->] (ptx) -- (x);
         \draw[->] (pty) -- (y);
      
         % labels
         \node (x0) at (0,-0.5) {0};
         \node (x1) at (\myscale,-0.5) {1};
         \node (x2) at (2*\myscale,-0.5) {2};
         \node (x3) at (3*\myscale,-0.5) {3};
         \node (y0) at (-0.5,0) {0};
         \node (y1) at (-0.5,\myscale) {1};
         \node (y2) at (-0.5,2*\myscale) {2};
         
         % subgroups
         % y=0:
         \node (G) at (0,0) {$G$};
         \node (H) at (\myscale,0) {$H$};
         \node (G3) at (2*\myscale,0) {$\gamma_3$};
         \node (G4) at (3*\myscale,0) {$1$};
         % y=1:
         \node (K) at (0,\myscale) {$K$};
         \node (G2) at (\myscale,\myscale) {$\gamma_2$};
         \node (G32) at (2*\myscale,\myscale) {$\gamma_3$};
         \node (G42) at (3*\myscale,\myscale) {$1$};
         % y=2:
         \node (L) at (0,2*\myscale) {$1$};
         \node (G33) at (\myscale,2*\myscale) {$1$};
         \node (G43) at (2*\myscale,2*\myscale) {$1$};
         \node (G44) at (3*\myscale,2*\myscale) {$1$};
      \end{tikzpicture}
      \caption{The $(M, G)$-filter $\phi$.}
      \label{fig:UT4}
   \end{subfigure}\hfill%
   \begin{subfigure}[b]{0.45\textwidth}
      \centering
      \begin{tikzpicture}
         \pgfmathsetmacro{\myscale}{0.5}
         % axes
         \node (ptx) at (-0.328,-0.2) {};
         \node (pty) at (-0.2,-0.328) {};
         \node (x) at (0.4 + 4*\myscale,-0.2) {};
         \node (y) at (-0.2, 0.45 + 4*\myscale) {};
         \draw[->] (ptx) -- (x);
         \draw[->] (pty) -- (y);
      
         % labels
         \node (x0) at (0,-0.5) {0};
         \node (x1) at (\myscale,-0.5) {1};
         \node (x2) at (2*\myscale,-0.5) {2};
         \node (x3) at (3*\myscale,-0.5) {3};
         \node (x4) at (4*\myscale,-0.5) {4};
         \node (y0) at (-0.5,0) {0};
         \node (y1) at (-0.5,\myscale) {1};
         \node (y2) at (-0.5,2*\myscale) {2};
         \node (y3) at (-0.5,3*\myscale) {3};
         \node (y4) at (-0.5,4*\myscale) {4};
         
         % subgroups
         % y=0:
         \node (G) at (0,0) {$G$};
         \node (H) at (\myscale,0) {$H$};
         \node (G3) at (2*\myscale,0) {$\gamma_2$};
         \node (G4) at (3*\myscale,0) {$\gamma_3$};
         \node (G4) at (4*\myscale,0) {$1$};
         % y=1:
         \node (K) at (0,\myscale) {$K$};
         \node (G2) at (\myscale,\myscale) {$\gamma_2$};
         \node (G32) at (2*\myscale,\myscale) {$\gamma_3$};
         \node (G42) at (3*\myscale,\myscale) {$1$};
         % y=2:
         \node (L) at (0,2*\myscale) {$\gamma_2$};
         \node (G33) at (\myscale,2*\myscale) {$\gamma_3$};
         \node (G43) at (2*\myscale,2*\myscale) {$1$};
         % y=3:
         \node at (0,3*\myscale) {$\gamma_3$};
         \node at (1*\myscale,3*\myscale) {$1$};
         % y=4:
         \node at (0,4*\myscale) {$1$};
      \end{tikzpicture}
      \caption{The $(M', G)$-filter $\phi'$.}
      \label{fig:UT4-2}
   \end{subfigure}
   \caption{The plots of two filters from Example~\ref{ex:UT4}. The filter in (A) is inertia-free and faithful, but the filter in (B) is only inertia-free.}
   \label{fig:UT4-main}
\end{figure}

\end{ex}

\begin{ex}\label{ex:genus3}
   We consider a group examined in \cite{ELGO:auts}*{Section 12.1} and \cite{M:efficient-filters}*{Section 5}.
   For a fixed odd prime $p$, we define a $p$-group $G$ by a power-commutator presentation, where all trivial commutators are omitted
   \begin{align*} 
   G = \langle g_1,\dots,g_{13} &\mid \text{exponent }p, [g_{10},g_6]=g_{11}, [g_{10},g_7] = g_{12},\\
   &\;\;\; [g_2,g_1]=[g_4,g_3]=[g_6,g_5]=[g_8,g_7]=[g_{10},g_9]=g_{13}\rangle.
   \end{align*}
   In \cite{M:efficient-filters}, we defined an $(\N^2, G)$-filter, $\tau$, where $\N^2$ is totally ordered by $\preceq_\ell$. 

   Observe from the presentation that $G$ has class 2 and $\gamma_2= \langle g_{11},g_{12},g_{13}\rangle$. 
   The following subgroups are characteristic
   \begin{align*}
   J_1 &= \langle g_1,\dots,g_9,\gamma_2\rangle, & J_4 &= \langle g_9,\gamma_2 \rangle,\\
   J_2 &= \langle g_1,\dots,g_5,g_8,g_9,\gamma_2 \rangle, & H &= \langle g_{13}\rangle. \\
   J_3 &= \langle g_5,g_8,g_9,\gamma_2\rangle, & &
   \end{align*}
   The details of this are given in \cite{M:efficient-filters}.
   The image of $\tau$ produces the following characteristic series
   \[ G> J_1 > J_2 > J_3 > J_4 > \gamma_2 > H > 1.\]

   Using techniques developed in \cite{BW:isometry}, $G$ has more characteristic
   subgroups:
   \begin{align*}
   K_1 &= \langle g_5,\dots, g_{10}, \gamma_2 \rangle, & 
   K_2 &= \langle g_1,\dots,g_4,\gamma_2 \rangle.
   \end{align*}
   Let $M=\N^2\times \N\times \N$, where $M$ is ordered by the direct product ordering: for $(s, i, j), (t, k, l)\in M$, $(s, i, j)\preceq (t, k, l)$ if $s\preceq_\ell t$, $i\leq k$ and $j\leq l$. 
   Let 
   \begin{align*} 
      D &= \{(s, 0, 0)\in M \mid s\in \N^2 \}\cup \{e_2, e_3\}, &
      E &= M\setminus D.
   \end{align*}
   We define a function, $\pi$, on $D$ into $\Nor(G)$ via $\pi_{(s, 0, 0)} =
   \tau_s$, $\pi_{e_2} = K_1$, and $\pi_{e_3}=K_2$. We define an $(M, G)$-filter
   $\phi$ such that for $s\in M$, 
   \begin{align*}
      \phi_s &= \prod_{\textbf{r}\in R_E(s)} [\pi_{\textbf{r}}].
   \end{align*}
   From \cite{W:char}*{Theorem~3.3}, $\phi$ is an $(M, G)$-filter. We plot
   $\phi$ in Figure~\ref{fig:genus3} along with its lattice $\Lat(\phi)$. With
   just these characteristic subgroups, the potential order of the Lie
   automorphism group has decreased from roughly $p^{10^2}$ to roughly $p^{39}$,
   the order of the group stabilizing the arrangement of subspaces in
   $G/\gamma_2$, see Figure~\ref{fig:filter-lattice}. 
   
   Since $\tau$ is inertia-free, $\phi$ is inertia-free. If
   $\mathcal{X}=\{g_1,\dots, g_{13}\}$, then $\mathcal{X}$ is filtered by
   $\phi$. However, $\phi$ is not a faithful filter, which can be seen in
   Figure~\ref{fig:genus3}. One can define a new filter $\theta$ from $\phi$
   such that $\theta$ is faithful and inertia-free, but it is not known how to
   do this in general. It is also not known how this affects the associated Lie
   ring. Of course Theorem~\ref{thm:Main2} states they are in bijection, but it
   is not clear if, for example, the center of the Lie ring is larger than the
   center of the group. In addition, $\phi$ was defined arbitrarily from $\tau$.
   It is not clear if there is a ``best'' way to refine a filter. 

   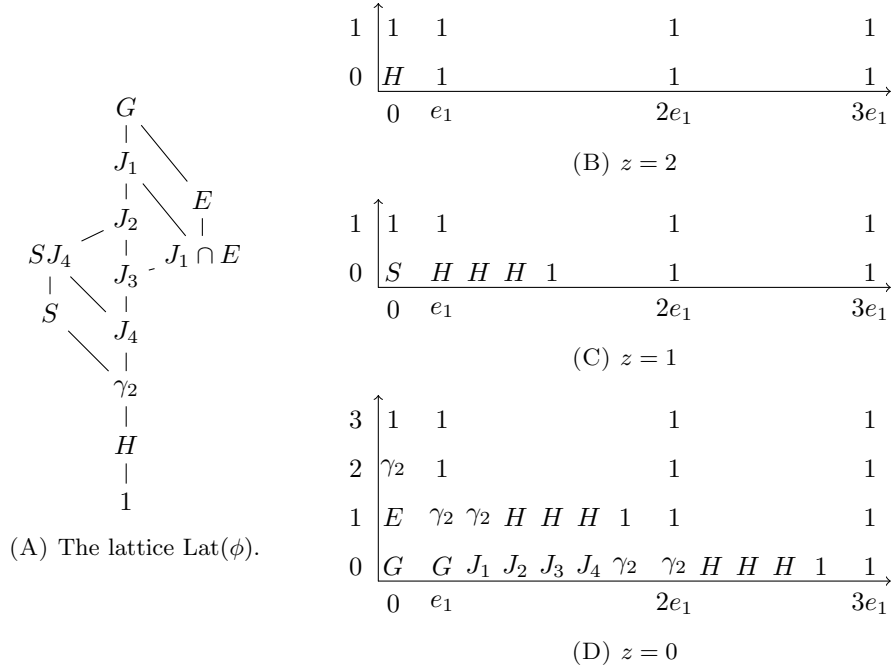
\begin{figure}[ht]
      \begin{subfigure}{0.32\textwidth}
         \centering
         \begin{tikzpicture}[scale=0.75]
            % subgroups
            \node (G) at (0,7) {$G$};
            \node (J1) at (0,6) {$J_1$};
            \node (J2) at (0,5) {$J_2$};
            \node (J1capE) at (1.6,4.35) {$J_1\cap K_1$};
            \node (J3) at (0,4) {$J_3$};
            \node (J4) at (0,3) {$J_4$};
            \node (G2) at (0,2) {$\gamma_2$};
            \node (H) at (0,1) {$H$};
            \node (1) at (0,0) {$1$};
            \node (S) at (-1.5,3.35) {$K_2$};
            \node (SJ4) at (-1.5,4.35) {$K_2J_4$};
            \node (E) at (1.6,5.35) {$K_1$};

            % containment
            \draw[-] (G) -- (J1);
            \draw[-] (G) -- (E);
            \draw[-] (J1) -- (J1capE);
            \draw[-] (J1) -- (J2);
            \draw[-] (J2) -- (J3);
            \draw[-] (E) -- (J1capE);
            \draw[-] (J1capE) -- (J3);
            \draw[-] (J2) -- (SJ4);
            \draw[-] (J3) -- (J4);
            \draw[-] (J4) -- (G2);
            \draw[-] (S) -- (SJ4);
            \draw[-] (S) -- (G2);
            \draw[-] (SJ4) -- (J4);
            \draw[-] (G2) -- (H);
            \draw[-] (H) -- (1);
         \end{tikzpicture}
         \caption{The lattice $\Lat(\phi)$.}
         \label{fig:filter-lattice}
      \end{subfigure}\hfill%
      \begin{subfigure}{0.65\textwidth}
         \centering
         \begin{subfigure}{\textwidth}
            \centering 
            \begin{tikzpicture}
               \pgfmathsetmacro{\myscale}{0.65}
               % axes
               \node (ptx) at (-0.328,-0.2) {};
               \node (pty) at (-0.2,-0.328) {};
               \node (x) at (0.4 + 9.75*\myscale,-0.2) {};
               \node (y) at (-0.2, 0.45 + \myscale) {};
               \draw[->] (ptx) -- (x);
               \draw[->] (pty) -- (y);
            
               % labels
               \node (x0) at (0,-0.5) {0};
               \node (x1) at (\myscale,-0.5) {$e_1$};
               \node (x5) at (5.75*\myscale,-0.5) {$2e_1$};
               \node (x5) at (9.75*\myscale,-0.5) {$3e_1$};
               \node (y0) at (-0.5,0) {0};
               \node (y1) at (-0.5,\myscale) {1};
               
               % subgroups
               % y=0:
               \node (G) at (0,0) {$H$};
               \node (H) at (\myscale,0) {$1$};
               \node (G40) at (5.75*\myscale,0) {$1$};
               \node (G40) at (9.75*\myscale,0) {$1$};
               % y=1:
               \node (G) at (0,\myscale) {$1$};
               \node (H) at (\myscale,\myscale) {$1$};
               \node (G40) at (5.75*\myscale,\myscale) {$1$};
               \node (G40) at (9.75*\myscale,\myscale) {$1$};
            \end{tikzpicture}
            \caption{$z=2$}
            \label{fig:z=2}
         \end{subfigure}
         \begin{subfigure}{\textwidth}
            \centering 
            \begin{tikzpicture}
               \pgfmathsetmacro{\myscale}{0.65}
               % axes
               \node (ptx) at (-0.328,-0.2) {};
               \node (pty) at (-0.2,-0.328) {};
               \node (x) at (0.4 + 9.75*\myscale,-0.2) {};
               \node (y) at (-0.2, 0.45 + \myscale) {};
               \draw[->] (ptx) -- (x);
               \draw[->] (pty) -- (y);
            
               % labels
               \node (x0) at (0,-0.5) {0};
               \node (x1) at (\myscale,-0.5) {$e_1$};
               \node (x5) at (5.75*\myscale,-0.5) {$2e_1$};
               \node (x5) at (9.75*\myscale,-0.5) {$3e_1$};
               \node (y0) at (-0.5,0) {0};
               \node (y1) at (-0.5,\myscale) {1};
               
               % subgroups
               % y=0:
               \node (G) at (0,0) {$K_2$};
               \node (H) at (\myscale,0) {$H$};
               \node (G3) at (1.75*\myscale,0) {$H$};
               \node (G4) at (2.5*\myscale,0) {$H$};
               \node (G40) at (3.25*\myscale,0) {$1$};
               \node (G40) at (5.75*\myscale,0) {$1$};
               \node (G40) at (9.75*\myscale,0) {$1$};
               % y=1:
               \node (G) at (0,\myscale) {$1$};
               \node (H) at (\myscale,\myscale) {$1$};
               \node (G40) at (5.75*\myscale,\myscale) {$1$};
               \node (G40) at (9.75*\myscale,\myscale) {$1$};
            \end{tikzpicture}
            \caption{$z=1$}
            \label{fig:z=1}
         \end{subfigure}
         \begin{subfigure}{\textwidth}
            \centering 
            \begin{tikzpicture}
               \pgfmathsetmacro{\myscale}{0.65}
               % axes
               \node (ptx) at (-0.328,-0.2) {};
               \node (pty) at (-0.2,-0.328) {};
               \node (x) at (0.4 + 9.75*\myscale,-0.2) {};
               \node (y) at (-0.2, 0.45 + 3*\myscale) {};
               \draw[->] (ptx) -- (x);
               \draw[->] (pty) -- (y);
            
               % labels
               \node (x0) at (0,-0.5) {0};
               \node (x1) at (\myscale,-0.5) {$e_1$};
               \node (x5) at (5.75*\myscale,-0.5) {$2e_1$};
               \node (x5) at (9.75*\myscale,-0.5) {$3e_1$};
               \node (y0) at (-0.5,0) {0};
               \node (y1) at (-0.5,\myscale) {1};
               \node (y2) at (-0.5,2*\myscale) {2};
               \node (y3) at (-0.5,3*\myscale) {3};
               
               % subgroups
               % y=0:
               \node (G) at (0,0) {$G$};
               \node (H) at (\myscale,0) {$G$};
               \node (G3) at (1.75*\myscale,0) {$J_1$};
               \node (G4) at (2.5*\myscale,0) {$J_2$};
               \node (G40) at (3.25*\myscale,0) {$J_3$};
               \node (G40) at (4*\myscale,0) {$J_4$};
               \node (G40) at (4.75*\myscale,0) {$\gamma_2$};
               \node (G40) at (5.75*\myscale,0) {$\gamma_2$};
               \node (G40) at (6.5*\myscale,0) {$H$};
               \node (G40) at (7.25*\myscale,0) {$H$};
               \node (G40) at (8*\myscale,0) {$H$};
               \node (G40) at (8.75*\myscale,0) {$1$};
               \node (G40) at (9.75*\myscale,0) {$1$};
               % y=1:
               \node (G) at (0,\myscale) {$K_1$};
               \node (H) at (\myscale,\myscale) {$\gamma_2$};
               \node (G3) at (1.75*\myscale,\myscale) {$\gamma_2$};
               \node (G4) at (2.5*\myscale,\myscale) {$H$};
               \node (G40) at (3.25*\myscale,\myscale) {$H$};
               \node (G40) at (4*\myscale,\myscale) {$H$};
               \node (G40) at (4.75*\myscale,\myscale) {$1$};
               \node (G40) at (5.75*\myscale,\myscale) {$1$};
               \node (G40) at (9.75*\myscale,\myscale) {$1$};
               % y=2:
               \node (G) at (0,2*\myscale) {$\gamma_2$};
               \node (H) at (\myscale,2*\myscale) {$1$};
               \node (G40) at (5.75*\myscale,2*\myscale) {$1$};
               \node (G40) at (9.75*\myscale,2*\myscale) {$1$};
               % % y=3:
               \node (14) at (0,3*\myscale) {$1$};
               \node (H) at (\myscale,3*\myscale) {$1$};
               \node (G40) at (5.75*\myscale,3*\myscale) {$1$};
               \node (G40) at (9.75*\myscale,3*\myscale) {$1$};
            \end{tikzpicture}
            \caption{$z=0$}
            \label{fig:z=0}
         \end{subfigure}
      \end{subfigure}
      \caption{We plot the $(M, G)$-filter from Example~\ref{ex:genus3}. We
      construct the lattice $\Lat(\phi)$ generated by $\im(\phi)$ in
      Figure~\ref{fig:filter-lattice}. Since $M=\N^2\times\N\times\N$ is ordered
      by the direct product ordering of three total orders, we can plot $\phi$
      on a three axes. In Figures~\ref{fig:z=2},~\ref{fig:z=1},
      and~\ref{fig:z=0}, we plot $\phi$ given $(x, y, z)\in M$ for fixed
      $z$-values.}
      \label{fig:genus3}
   \end{figure}
\end{ex}

\section*{Acknowledgements}

The author is indebted to J.~B.~Wilson for encouraging this research and
providing endless feedback along the way. We also thank P.~A.~Brooksbank,
A.~Hulpke, and T.~Penttila for many helpful discussions. We are grateful for the
anonymous reviewer whose thorough comments have improved the quality of the
paper.

\begin{bibdiv}
\begin{biblist}

\bib{BCQ:Poly}{article}{
   author={Babai, L{\'a}szl{\'o}},
   author={Codenotti, Paolo},
   author={Qiao, Youming},
   title={Polynomial-Time Isomorphism Test for Groups with no abelian Normal Subgroups},
   conference={
      title={39th Internat. Colloq. on Automata, Languages and Programming (ICALP'12)},
   },
   book={
      publisher={Springer LNCS 7391},
   },
   date={2012},
   pages={51--62},
}

\bib{BGLQW:WL-iso}{unpublished}{
   author={Brooksbank, Peter A.},
   author={Grochow, Joshua A.},
   author={Li, Yinan},
   author={Qiao, Youming}, 
   author={Wilson, James B.},
   title={Incorporating Weisfeiler-Leman into algorithms for group isomorphism},
   note={\texttt{\href{https://arxiv.org/abs/1905.02518}{arXiv:1905.02518}}},
   status={preprint},
}

\bib{BOW:graded-algebras}{article}{
   author={Brooksbank, Peter A.},
   author={O'Brien, E. A.},
   author={Wilson, James B.},
   title={Testing isomorphism of graded algebras},
   journal={Trans. Amer. Math. Soc.},
   volume={372},
   date={2019},
   number={11},
   pages={8067--8090},
   issn={0002-9947},
}

\bib{BW:isometry}{article}{
   author={Brooksbank, Peter A.},
   author={Wilson, James B.},
   title={Computing isometry groups of Hermitian maps},
   journal={Trans. Amer. Math. Soc.},
   volume={364},
   date={2012},
   number={4},
   pages={1975--1996},
   issn={0002-9947},
   review={\MR{2869196}},
}

\bib{CELG:special}{article}{
   author={Cannon, John J.},
   author={Eick, Bettina},
   author={Leedham-Green, Charles R.},
   title={Special polycyclic generating sequences for finite soluble groups},
   journal={J. Symbolic Comput.},
   volume={38},
   date={2004},
   number={5},
   pages={1445--1460},
   issn={0747-7171},
   review={\MR{2168723}},
}

\bib{CH:iso}{article}{
   author={Cannon, J. J.},
   author={Holt, D. F.},
   title={Automorphism group computation and isomorphism testing in finite
   groups},
   journal={J. Symbolic Comput.},
   volume={35},
   date={2003},
   number={3},
   pages={241--267},
}

\bib{ELGO:auts}{article}{
   author={Eick, Bettina},
   author={Leedham-Green, C. R.},
   author={O'Brien, E. A.},
   title={Constructing automorphism groups of $p$-groups},
   journal={Comm. Algebra},
   volume={30},
   date={2002},
   number={5},
   pages={2271--2295},
   issn={0092-7872},
   review={\MR{1904637}},
}

\bib{EW:exhibit}{article}{
   author={Eick, Bettina},
   author={Wright, Charles R. B.},
   title={Computing subgroups by exhibition in finite solvable groups},
   journal={J. Symbolic Comput.},
   volume={33},
   date={2002},
   number={no.~2},
   pages={129--143},
   issn={0747-7171},
   review={\MR{1879377}},
}

\bib{GH:Book}{book}{
   author={Geroldinger, Alfred},
   author={Halter-Koch, Franz},
   title={Non-unique factorizations},
   series={Pure and Applied Mathematics (Boca Raton)},
   volume={278},
   note={Algebraic, combinatorial and analytic theory},
   publisher={Chapman \& Hall/CRC, Boca Raton, FL},
   date={2006},
   pages={xxii+700},
   isbn={978-1-58488-576-4},
   isbn={1-58488-576-9},
   review={\MR{2194494}},
}

\bib{Grillet:book}{book}{
   author={Grillet, P. A.},
   title={Commutative semigroups},
   series={Advances in Mathematics (Dordrecht)},
   volume={2},
   publisher={Kluwer Academic Publishers, Dordrecht},
   date={2001},
   pages={xiv+436},
   isbn={0-7923-7067-8},
   review={\MR{2017849}},
   doi={10.1007/978-1-4757-3389-1},
}

\bib{GQ:group-iso}{article}{
   author={Grochow, Joshua A.},
   author={Qiao, Youming},
   title={Algorithms for group isomorphism via group extensions and
   cohomology},
   journal={SIAM J. Comput.},
   volume={46},
   date={2017},
   number={4},
   pages={1153--1216},
   issn={0097-5397},
}

\bib{HN:p-quotient}{article}{
   author={Havas, George},
   author={Newman, M. F.},
   title={Application of computers to questions like those of Burnside},
   conference={
      title={Burnside groups (Proc. Workshop, Univ. Bielefeld, Bielefeld,
      1977)},
   },
   book={
      series={Lecture Notes in Math.},
      volume={806},
      publisher={Springer, Berlin},
   },
   date={1980},
   pages={211--230},
}

\bib{Higman:auto-fixed}{article}{
   author={Higman, Graham},
   title={Groups and rings having automorphisms without non-trivial fixed
   elements},
   journal={J. London Math. Soc.},
   volume={32},
   date={1957},
   pages={321--334},
   issn={0024-6107},
   review={\MR{0089204}},
   label={H1},
}

\bib{Higman:Lie-techniques}{article}{
   author={Higman, Graham},
   title={Lie ring methods in the theory of finite nilpotent groups},
   conference={
      title={Proc. Internat. Congress Math. 1958},
   },
   book={
      publisher={Cambridge Univ. Press, New York},
   },
   date={1960},
   pages={307--312},
   review={\MR{0116050}},
   label={H2},
}   

\bib{Khukhro:p-Auts}{book}{
   author={Khukhro, E. I.},
   title={$p$-automorphisms of finite $p$-groups},
   series={London Mathematical Society Lecture Note Series},
   volume={246},
   publisher={Cambridge University Press, Cambridge},
   date={1998},
   pages={xviii+204},
   isbn={0-521-59717-X},
   review={\MR{1615819}},
   label={K2},
}

\bib{Lazard:N-Series}{article}{
   author={Lazard, Michel},
   title={Sur les groupes nilpotents et les anneaux de Lie},
   language={French},
   journal={Ann. Sci. Ecole Norm. Sup. (3)},
   volume={71},
   date={1954},
   pages={101--190},
   issn={0012-9593},
   review={\MR{0088496 (19,529b)}},
}

\bib{M:efficient-filters}{article}{
   author={Maglione, Joshua},
   title={Efficient characteristic refinements for finite groups},
   journal={J. Symbolic Comput.},
   volume={80},
   date={2017},
   number={part 2},
   part={part 2},
   pages={511--520},
   issn={0747-7171},
   review={\MR{3574524}},
   label={M1},
}

\bib{M:classical}{article}{
   author={Maglione, Joshua},
   title={Longer nilpotent series for classical unipotent subgroups},
   journal={J. Group Theory},
   volume={18},
   date={2015},
   number={4},
   pages={569--585},
   issn={1433-5883},
   review={\MR{3365818}},
   label={M2},
}

\bib{M:small-auts}{article}{
   author={Maglione, Joshua},
   title={Most small $p$-groups have an automorphism of order 2},
   journal={Arch. Math. (Basel)},
   volume={108},
   date={2017},
   number={3},
   pages={225--232},
   issn={0003-889X},
   review={\MR{3614700}},
   label={M3},
}

\bib{Magnus:BCH-formula}{article}{
   author={Magnus, Wilhelm},
   title={A connection between the Baker-Hausdorff formula and a problem of
   Burnside},
   journal={Ann. of Math. (2)},
   volume={52},
   date={1950},
   pages={111--126},
   issn={0003-486X},
   review={\MR{0038964}},
   label={M5},
}

\bib{Magnus:Lie-ring}{article}{
   author={Magnus, Wilhelm},
   title={\"Uber Gruppen und zugeordnete Liesche Ringe},
   language={German},
   journal={J. Reine Angew. Math.},
   volume={182},
   date={1940},
   pages={142--149},
   issn={0075-4102},
   review={\MR{0003411}},
   label={M6},
}

\bib{NO:p-quotient2}{article}{
   author={Newman, M. F.},
   author={O'Brien, E. A.},
   title={Application of computers to questions like those of Burnside. II},
   journal={Internat. J. Algebra Comput.},
   volume={6},
   date={1996},
   number={5},
   pages={593--605},
   issn={0218-1967},
   review={\MR{1419133}},
}

\bib{O'Brien:isomorphism}{article}{
   author={O'Brien, E. A.},
   title={Isomorphism testing for $p$-groups},
   journal={J. Symbolic Comput.},
   volume={17},
   date={1994},
   number={2},
   pages={131, 133--147},
}

\bib{Robinson:book}{book}{
   author={Robinson, Derek J. S.},
   title={A course in the theory of groups},
   series={Graduate Texts in Mathematics},
   volume={80},
   edition={2},
   publisher={Springer-Verlag, New York},
   date={1996},
   pages={xviii+499},
   isbn={0-387-94461-3},
   review={\MR{1357169}},
}

\bib{Seress:book}{book}{
   author={Seress, \'Akos},
   title={Permutation group algorithms},
   series={Cambridge Tracts in Mathematics},
   volume={152},
   publisher={Cambridge University Press, Cambridge},
   date={2003},
   pages={x+264},
   isbn={0-521-66103-X},
   review={\MR{1970241}},
   label={S1},
}

\bib{Sims:book}{book}{
   author={Sims, Charles C.},
   title={Computation with finitely presented groups},
   series={Encyclopedia of Mathematics and its Applications},
   volume={48},
   publisher={Cambridge University Press, Cambridge},
   date={1994},
   pages={xiii+604},
   isbn={0-521-43213-8},
   review={\MR{1267733}},
   label={S2},
}

\bib{W:char}{article}{
   author={Wilson, James B.},
   title={More characteristic subgroups, Lie rings, and isomorphism tests
   for $p$-groups},
   journal={J. Group Theory},
   volume={16},
   date={2013},
   number={6},
   pages={875--897},
   issn={1433-5883},
   review={\MR{3198722}},
}

\end{biblist}
\end{bibdiv}

\end{document}